\documentclass[11pt,reqno]{amsart}

\usepackage[T1]{fontenc}
\usepackage{lmodern}
\usepackage{microtype}
\usepackage[margin=1.03in]{geometry}
\usepackage{amsmath,amssymb,amsfonts,mathtools}
\usepackage{amsthm}
\usepackage{booktabs}
\usepackage{enumitem}
\usepackage{needspace}
\usepackage{xcolor}
\usepackage[colorlinks=true,
  linkcolor=blue!50!black,
  citecolor=blue!50!black,
  urlcolor=blue!50!black]{hyperref}
\usepackage[capitalize,noabbrev]{cleveref}

\allowdisplaybreaks
\numberwithin{equation}{section}
\setlist[enumerate]{leftmargin=2.2em,itemsep=0.25em,topsep=0.4em}

\theoremstyle{plain}
\newtheorem{theorem}{Theorem}[section]
\newtheorem{proposition}[theorem]{Proposition}
\newtheorem{lemma}[theorem]{Lemma}

\theoremstyle{remark}
\newtheorem{remark}[theorem]{Remark}

\newcommand{\Z}{\mathbb Z}
\newcommand{\Q}{\mathbb Q}
\newcommand{\qf}{\operatorname{qf}}
\newcommand{\ac}{\operatorname{ac}}
\newcommand{\Spec}{\operatorname{Spec}}

\title[Alternating Divisoriality]{Alternating Divisoriality of Prime Ideals along \\ Infinite Chains of B\'ezout Domains}

\author{Viet-Hoang Tran}
\address{Department of Mathematics, National University of Singapore, Singapore 119076}
\email{hoang.tranviet@u.nus.edu}
\urladdr{https://vh-tran.github.io/}

\author{Tan M. Nguyen}
\address{Department of Mathematics, National University of Singapore, Singapore 119076}
\email{tanmn@nus.edu.sg}
\urladdr{https://tanmnguyen89.github.io/}

\date{}

\subjclass[2020]{Primary 13F05; Secondary 13A15, 13F30}
\keywords{Pr\"ufer domain, B\'ezout domain, divisorial prime ideal}

\hypersetup{
  pdftitle={Alternating Divisoriality of Prime Ideals along Infinite Chains of Bezout Domains},
  pdfauthor={Viet-Hoang Tran}
}

\begin{document}
\raggedbottom

\begin{abstract}
A question posed in the literature asks whether the divisoriality of a prime
can alternate indefinitely along an ascending chain of overrings of a
Pr\"ufer domain or a descending chain of same-quotient-field underrings, and
what happens at the limit.  We construct such chains with all finite-stage
domains B\'ezout and show, in both directions, that the limit prime can be
either divisorial or nondivisorial.
\end{abstract}

\maketitle

\section{Background and main results}

When a domain is enlarged inside its quotient field, or replaced by a smaller
domain with the same quotient field, ideals may be compared by extension and
contraction.  The question considered here was posed in
\cite[Problem~12]{CFFG2014}: can a prime, followed through such an infinite
chain, alternate at every stage with respect to a natural ideal-closure
property, and what happens in the limiting union or intersection?  To
formulate the question, we recall the $v$-operation.

Let $D$ be a domain with quotient field $L$.  For a nonzero fractional ideal
$I$ of $D$, set
\begin{equation}\label{eq:v-operation}
 I^{-1}=(D:I)=\{x\in L:xI\subseteq D\},
 \qquad
 I^{v_D}=(D:(D:I)).
\end{equation}
The ideal $I$ is \emph{divisorial in $D$} when $I=I^{v_D}$, that is, when it
is recovered from its ideal dual by dualizing again.  Since both colons in
\eqref{eq:v-operation} are defined with respect to $D$, divisoriality depends
on the ambient domain.

An \emph{overring} of $D$ is a domain $E$ with
$D\subseteq E\subseteq L$.  A domain $T\subseteq D$ is an \emph{underring}
of $D$, and it is a \emph{same-quotient-field underring} when $\qf(T)=L$.
If $P$ is an ideal of $D$, then $PE$ denotes its extension to $E$, while
$P\cap T$ denotes its contraction to $T$; for a prime $P$, $D_P$ is the
usual localization.  A Pr\"ufer domain is a domain in which every nonzero
finitely generated ideal is invertible, while a B\'ezout domain is one in
which every finitely generated ideal is principal.  Hence every B\'ezout
domain is Pr\"ufer, every overring of a Pr\"ufer domain is Pr\"ufer, and a
nonzero nondivisorial prime of a Pr\"ufer domain is necessarily not finitely
generated.

Divisorial prime ideals in Pr\"ufer domains have been studied through ideal
duals, powers, and structural classifications
\cite{HP1982,FHP1984,FHP1986,FHL2010}.  The dependence on the ambient domain
is already visible in comparable three-ring examples exhibiting both patterns
\[
 \text{divisorial--nondivisorial--divisorial}
 \qquad\text{and}\qquad
 \text{nondivisorial--divisorial--nondivisorial};
\]
the first arises from \cite[Example~4.10]{FHL2010} after passing to the
appropriate localization.  These examples show that divisoriality need not
vary monotonically, but they stop after three stages.  We now state the
question precisely.

\begin{enumerate}[label=\textup{(\alph*)}]
 \item Do there exist a Pr\"ufer domain $R$, a nonzero prime ideal $P$ of
 $R$, and a countably infinite strictly ascending chain of overrings
 \begin{equation}\label{eq:ascending-question}
  R=R_0\subsetneq R_1\subsetneq R_2\subsetneq\cdots\subsetneq R_P
 \end{equation}
 such that $PR_{2n}$ is divisorial in $R_{2n}$ and $PR_{2n+1}$ is
 nondivisorial in $R_{2n+1}$ for every $n\geq0$?  If so, for
 \[
   S:=\bigcup_{m\geq0}R_m\subseteq R_P,
 \]
 is $PS$ divisorial in $S$?

 \item Do there exist a Pr\"ufer domain $R$, a nonzero prime ideal $P$ of
 $R$, and a countably infinite strictly descending chain of underrings
 \begin{equation}\label{eq:descending-question}
  R=R_0\supsetneq R_1\supsetneq R_2\supsetneq\cdots,
  \qquad T:=\bigcap_{m\geq0}R_m,
 \end{equation}
 with $\qf(T)=\qf(R)$, such that $P\cap R_{2n}$ is divisorial in $R_{2n}$
 and $P\cap R_{2n+1}$ is nondivisorial in $R_{2n+1}$ for every $n\geq0$?
 If so, is $P\cap T$ divisorial in $T$?
\end{enumerate}

Later criteria governing extension and contraction
\cite{Lucas2015,Lucas2017} do not produce a compatible infinite chain or
determine its limit.  To the best of our knowledge, neither existence
question nor either limit question has been resolved.  Our answer has two
parts.  First, one strictly increasing bi-infinite chain yields the ascending
example from its nonnegative half and the descending example from its
nonpositive half read backwards;
divisoriality alternates at every finite stage, and both limit primes are
nondivisorial.  Second, separate constructions make the limit prime
divisorial in either direction without changing the finite-stage alternation.
Thus the alternating hypotheses do not determine the limit behavior.  Every
finite stage in these examples is B\'ezout.

\begin{theorem}[Bilateral construction]\label{thm:main}
There exist a field $K$, a rank-one nondiscrete valuation domain $A$ of $K$
with maximal ideal $M$, and B\'ezout domains $S_j\subsetneq A$, $j\in\Z$,
each having quotient field $K$, with the following properties.
\begin{enumerate}[label=\textup{(\roman*)}]
 \item\label{item:strict} $S_j\subsetneq S_{j+1}$ for every $j\in\Z$.
 \item\label{item:primecompat} If $Q_j=M\cap S_j$, then $Q_j$ is a nonzero
 prime and, whenever $j\leq\ell$,
 \[
   Q_\ell\cap S_j=Q_j,
   \qquad Q_jS_\ell=Q_\ell.
 \]
 \item\label{item:localization} $(S_0)_{Q_0}=A$.
 \item\label{item:parity} The prime $Q_j$ is divisorial in $S_j$ if and
 only if $j$ is even.
 \item\label{item:limits} The rings
 \[
   S^+=\bigcup_{j\geq0}S_j,
   \qquad S^-=\bigcap_{j\leq0}S_j
 \]
 are B\'ezout domains with quotient field $K$, and $S^+\subsetneq A$.
 Moreover,
 \[
   M\cap S^+=Q_0S^+,
   \qquad M\cap S^-=Q_0\cap S^-,
 \]
 and both displayed primes are nondivisorial in their respective rings.
\end{enumerate}
\end{theorem}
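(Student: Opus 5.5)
The plan is to reduce everything to lattice-ordered groups via the Krull--Jaffard--Kaplansky--Ohm correspondence. Every B\'ezout domain is determined, up to the relevant data, by its group of divisibility $G(D)=K^\times/D^\times$, which is a lattice-ordered group. Conversely, every lattice-ordered abelian group arises this way. Under this dictionary:
\begin{itemize}
 \item overrings and underrings with quotient field $K$ correspond to order-coarsenings and order-refinements of a fixed abelian group $\Gamma=K^\times/k^\times$;
 \item primes of $D$ correspond to prime filters of $G(D)^+$;
 \item localizations correspond to quotients by convex $\ell$-subgroups.
\end{itemize}
So I would fix one abelian group $\Gamma$ together with an order-homomorphism $v\colon\Gamma\to\mathbb R$ with dense image; $v$ realizes $A$ as a rank-one nondiscrete valuation domain. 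The rings $S_j$ are then obtained from a bi-infinite family of lattice orders $\leq_j$ on $\Gamma$. Each $\leq_j$ is finer than $\leq_{j+1}$, and all of them refine the order induced by $v$. Concretely, I would take $\Gamma=\bigoplus_{i\in\Z}\Q e_i\oplus\Q^{(\text{dense part})}$ with $v$ nondiscrete, and let $\leq_j$ be the product order of $v$ with the extra rank-one valuations $w_i$ for $i\leq j$ minus the ones already absorbed. A Jaffard--Ohm realization (for instance with $K$ a group field $k(\Gamma)$) then makes each $S_j$ an intersection $A\cap\bigcap_{i} W_i^{(j)}$ of finitely many, or locally finitely many, valuation rings. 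This representation gives B\'ezoutness, quotient field $K$, strictness \ref{item:strict}, and $S_j\subsetneq A$ directly.

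The prime compatibilities in \ref{item:primecompat} and the localization \ref{item:localization} are next. I would arrange that in every $S_j$ the centre of $A$ is a prime $Q_j$ with $(S_j)_{Q_j}=A$ for all $j$, not only $j=0$. Then $Q_\ell\cap S_j=M\cap S_j=Q_j$ is automatic. The identity $Q_jS_\ell=Q_\ell$ follows because $Q_j$ is idempotent, being $M$-contracted with $M^2=M$, and every element of $Q_\ell$ has, by B\'ezoutness and the product-order description, a factor in $Q_j$ times a unit of $S_\ell$.

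The heart of the matter is \ref{item:parity}. Since $(S_j)_{Q_j}=A$ is nondiscrete, $Q_j$ is idempotent, so $Q_j^{-1}=(Q_j:Q_j)$. If $Q_j$ is maximal, then $Q_j^{-1}=S_j$ and $Q_j$ is nondivisorial. If $Q_j$ is nonmaximal, I would use the Huckaba--Papick and Fontana--Huckaba--Papick description $(Q_j:Q_j)=A\cap\bigcap\{(S_j)_N : N\not\supseteq Q_j\}$. In that case $Q_j$ is divisorial exactly when $(S_j:(Q_j:Q_j))=Q_j$, and this is a condition on which maximal ideals lie above $Q_j$. So the parity is to be engineered as follows.
\begin{itemize}
 \item For even $j$, $Q_j$ is nonmaximal and lies under a maximal ideal $N_j$ corresponding to a coarse valuation $W_j$ with $W_j\supsetneq$ nothing relevant. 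Then $(S_j:(Q_j:Q_j))$ cuts back to exactly $Q_j$, which I would check as an inequality on valuations: $x\in(S_j:A)\Leftrightarrow v(x)>0$ and $w(x)\geq0$ on the remaining components.
 \item For odd $j$, I would choose the order so that $Q_j$ becomes maximal, or more flexibly so that $(S_j:(Q_j:Q_j))$ strictly contains $Q_j$.
\end{itemize}
Passing from $S_j$ to $S_{j+1}$ alternately adds and removes the component above $Q_j$, which is why the construction is indexed by the parity of $j$.

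Finally, for \ref{item:limits}, $S^+$ is a directed union of B\'ezout domains and hence B\'ezout. For $S^-$ I would use the description as an intersection of valuation rings from a family of finite character to get B\'ezoutness and quotient field $K$. The limit orders on $\Gamma$ make $M\cap S^\pm$ maximal, or otherwise make its inverse coincide with the ring itself, so both limit primes are nondivisorial. I expect the main obstacle to be the simultaneous requirement in the descending direction: the intersection $S^-$ must still be B\'ezout with quotient field $K$, the odd/even switching must be compatible with strict inclusions in both directions, and the divisoriality criterion must be verified uniformly in $j$. I would attack this first by writing the $\ell$-group orders explicitly and checking the $v$-closure as a statement about positive cones.
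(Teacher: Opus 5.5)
Your criterion is the right one. Since $(S_j)_{Q_j}=A$ has value group $\Q$, $Q_j$ is idempotent and $Q_j^{-1}=(Q_j:Q_j)=A\cap\bigcap_{N\not\supseteq Q_j}(S_j)_N$. Such a prime is divisorial exactly when $Q_j^{-1}\neq S_j$. But the proposal never produces rings realizing the alternation, and the mechanisms you do name provably cannot work.

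First, no $Q_j$ can be maximal. Suppose $Q_j$ were maximal in $S_j$. Any maximal ideal $N'$ of $S_{j+1}$ containing $Q_{j+1}$ contracts to a prime containing $Q_j$, hence to $Q_j$. By the extension property for primes of Pr\"ufer overrings, $N'=Q_jS_{j+1}=Q_{j+1}$. So $Q_{j+1}$ is a maximal idempotent prime, $Q_{j+1}^{-1}=S_{j+1}$, and for odd $j$ this contradicts divisoriality at the even index $j+1$. The same argument applied to $S^-\subseteq S_0$ shows $M\cap S^-$ cannot be maximal either.

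Second, finite or finite-character representations are ruled out. Let $S$ be a Pr\"ufer domain of finite character with $Q\subsetneq N_0$ and $N_0$ maximal. Take $x\in N_0\setminus Q$ and $y\in N_0$ outside the finitely many other maximal ideals containing $x$. The invertible ideal $I=(x,y)$ lies in $N_0$ only and not in $Q$, so $S\subsetneq I^{-1}\subseteq S_Q\cap\bigcap_{N\not\supseteq Q}S_N=(Q:Q)$, and $Q$ is divisorial. Hence the odd stages cannot have finite character. Neither can $S^-$, since its prime is nonmaximal by the first point. Besides, a finite-character intersection of valuation rings need not be Pr\"ufer at all (Krull domains), so it cannot supply B\'ezoutness of $S^-$.

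Third, suppose the auxiliary valuations are rank one and distinct from $v$, as in your ``product order of $v$ with the $w_i$.'' Then $Q_j$ lies in none of their centers; otherwise $A$ and $V_{w_i}$ would be comparable rank-one overrings of one valuation domain, hence equal. So $Q_j$ contains a unit of each $V_{w_i}$, $(A:Q_j)=A$, and $(S_j:Q_j)=S_j$ at \emph{every} stage. The even stages need an essential valuation \emph{refining} $v$ (rank at least two) centered above $Q_j$, not a coarse one.

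So at odd stages you need a nonmaximal $Q_j$ whose overlying maximal ideals are not centers of anything in the defining family. They must instead be forced as limits of infinitely many valuations whose centers miss $Q_j$. The chain must still increase when such a limit is promoted to an explicit condition at the next stage.

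That is the missing idea, and it is exactly the paper's device. The rank-two rings $W_{i,m}$ (specialize $T\mapsto X_i^m(1+Y)$, then refine by $u_i$) accumulate onto the composite $B_i$ through the limiting inclusion $A\cap\bigcap_{m}W_{i,m}\subseteq B_i$.
\begin{enumerate}
\item In $H_n$, every $W_{i,m}$ has the unit $p_{i,m}\in Q$, so $(H_n:Q)=H_n$, even though $Q$ lies strictly below the center of $B_{n+1}$.
\item In $E_n$, the essential valuation $B_n$ gives $(E_n:Q)=H_n$ and $(E_n:H_n)=Q$.
\item The limiting inclusion gives $H_n\subseteq B_{n+1}$, hence $H_n\subseteq E_{n+1}$.
\end{enumerate}
B\'ezoutness of these countable, non-finite-character intersections comes from the constant-avoidance lemma over an uncountable $k$. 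An appeal to Jaffard--Ohm would still have to produce all the $S_j$, with the required inclusions, inside one field.

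Your $\ell$-group dictionary is not wrong in principle, but as written it only delivers the routine parts: $Q_\ell\cap S_j=Q_j$, B\'ezoutness of $S^+$ as a directed union, and $Q_jS_\ell=Q_\ell$. The last follows from the Pr\"ufer-overring extension lemma rather than your factorization heuristic. Items (i), (iii), (iv), and (v) remain unproved.
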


For the ascending question, take $R=S_0$, $P=Q_0$, and $R_m=S_m$ for
$m\geq0$.  For the descending question, take the same initial pair and set
$R_m=S_{-m}$.
The compatibility and parity assertions give the required alternation, and
the limit assertion gives a nondivisorial prime in both cases.

\begin{theorem}[Both limit alternatives]\label{thm:limit-alternatives}
For both the ascending and descending questions, there are chains with a
divisorial limit prime and chains with a nondivisorial limit prime.  In the
ascending case, the opposite limit behaviors may occur with the same initial
pair $(R,P)$ and the same localization $R_P$.  Every finite stage may be
chosen to be B\'ezout.
\end{theorem}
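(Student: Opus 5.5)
Theorem~\ref{thm:main} already supplies, in both directions, alternating chains of B\'ezout domains whose limit prime is nondivisorial. So the plan is to produce only the divisorial-limit alternatives, and in the ascending case to do so with the same initial pair $(S_0,Q_0)$ and the same localization $(S_0)_{Q_0}=A$. Two standard facts about Pr\"ufer domains will be used throughout.

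\begin{itemize}
 \item[(F1)] For a prime $Q$ of a Pr\"ufer domain $D$, the ring $(D:Q)$ differs from $D$ exactly when $Q$ is divisorial; a maximal ideal is divisorial iff it is invertible.
 \item[(F2)] If $D_Q$ is a valuation ring whose maximal ideal is not principal (as in the rank-one nondiscrete $A$), then $QD_Q$ is nondivisorial in $D_Q$.
\end{itemize}

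\textbf{Ascending case.} Keep $S_0,\dots,S_m,\dots$ from Theorem~\ref{thm:main} but interleave a modification at the limit. Let $N$ be the product of the maximal ideals of the $S_j$ other than those centered on $M$. I would build a B\'ezout domain $S'\supseteq S^+$, $S'\subsetneq A$, in which the prime $M\cap S'$ becomes divisorial. The idea is to make $M\cap S'$ a non-maximal prime lying under a maximal ideal carrying a principal (discrete) contribution; for a Pr\"ufer domain, a non-maximal prime $Q$ is divisorial when $(S':Q)$ strictly contains $S'$.

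Concretely, I would redo the construction of Theorem~\ref{thm:main} from the start, with $K$ carrying one extra valuation $w$ independent of $A$. The $S_j$ are taken as Kaplansky--Jaffard--Ohm B\'ezout intersections $A_j\cap V_1\cap\dots$ of finitely many valuation rings, with the controlling rings added at stage $j$. The chain is then $R_m=S_m$ for every $m$ but one more step, and the union is modified by adjoining an extra element: at odd $m$ we add a ring witnessing nondivisoriality, and at even $m$ we add one witnessing divisoriality. The union $S^{\prime+}$ is arranged so that $Q_0S^{\prime+}$ is the intersection with $S^{\prime+}$ of a principal ideal of a fixed overring. This intersection is divisorial, since contractions of principal ideals from flat overrings containing the inverse element are divisorial. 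Because all $S_m$ with $m\ge0$ share the localization $A$ at $Q_0$ and $S_0$ is unchanged, the same initial pair $(R,P)$ and the same $R_P$ occur for both limit behaviors.

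\textbf{Descending case.} I would define $R_m$ as an intersection $A\cap\bigcap_{k\le m}W_k$, where $A$ is the valuation ring above and the $W_k$ are valuation rings chosen so that at each step the finite intersection is B\'ezout (by Kaplansky--Jaffard--Ohm) and $P\cap R_m$ alternates between maximal with principal residual extra and nonmaximal nondivisorial. The intersection $T$ should keep $\qf(T)=K$ and have $P\cap T$ divisorial. The easiest way to force divisoriality in the limit is to make $P\cap T$ equal to $T\cap xV$ for a single DVR $V$ among the $W_k$ and some $x$; such an ideal is divisorial since $(T:(T:I))\subseteq xV\cap T$.

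\textbf{Main obstacle.} The hardest step is keeping every finite stage B\'ezout while also controlling $T$. An infinite intersection of valuation rings need not be Pr\"ufer, and this is fine since the statement only demands B\'ezout at finite stages. The real work is computing $(T:P\cap T)$; I would do it through the finite-character-free description of $T$ as an intersection. The same tension arises in the ascending case when checking that the modified union stays a proper subring of $A$ while the dual of $Q_0$ grows.
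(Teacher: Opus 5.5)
You are right that the nondivisorial alternatives are exactly \cref{thm:main}, as in the paper. The divisorial alternatives, however, are not established. Worse, the framework you propose for the finite stages cannot produce an infinite alternation in either direction. That framework is intersections of finitely many valuation rings, i.e.\ semilocal Pr\"ufer domains.

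Along any chain of Pr\"ufer domains with a common quotient field, the localization $V$ at the tracked prime is the same valuation ring at every stage. This is because a valuation overring of a Pr\"ufer domain is the localization at its center. In a semilocal Pr\"ufer domain $D$, a nonzero nonmaximal prime $Q$ is always divisorial:
\[
 (D:Q)=D_Q\cap\bigcap_{\mathfrak n\not\supseteq Q}D_{\mathfrak n}=:T,
 \qquad (D:T)=Q,
\]
with $\mathfrak n$ running over the maximal ideals not containing $Q$. A maximal $Q$ is divisorial if and only if $QD_Q$ is principal.

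Maximality of the tracked prime is also monotone. Going down, a nonmaximal $P\cap R_m$ stays nonmaximal; going up, a maximal one stays maximal. Both follow because contraction of primes from a Pr\"ufer overring is injective (\cref{lem:prime-extension}). Hence divisoriality changes at most once along such a chain. In particular, your descending pattern, alternating a maximal prime with a nonmaximal nondivisorial one, cannot occur.

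The odd stages must carry a nonmaximal prime with $(D:Q)=D$, and that forces infinitely many defining valuations. The paper uses the complete families $(W_{i,m})_{m\geq1}$, whose units $p_{i,m}\in Q$ trivialize every colon. It keeps these countable intersections B\'ezout by the avoidance argument of \cref{lem:intersection} over a large ground field.

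Beyond this, neither limit construction is actually specified. In the ascending case:
\begin{itemize}
 \item A chain determines its union, so keeping $R_m=S_m$ while modifying the union is not meaningful.
 \item Adding valuation rings at stage $j$ shrinks the intersection, so it cannot produce an ascending chain.
 \item Rebuilding everything with an extra valuation $w$ changes $S_0$, which undercuts the same-initial-pair claim.
 \item Your criterion for contracted principal ideals is vacuous: if $x^{-1}\in T$ then $T\subseteq xT$, so $xT\cap D=D$.
\end{itemize}
The missing device is to retain only the tail $\bigcap_{m>n}W_{\ast,m}$ at stage $n$. This changes no finite-stage colon computation and keeps $\widetilde S_0=S_0$. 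By \cref{lem:cofinal-tail} it makes the union exactly the composite valuation ring $B_\ast$, in which $M$ is a nonmaximal prime with $(B_\ast:M)=A$ and $(B_\ast:A)=M$. Your instinct to make the limit prime nonmaximal is right, but no discrete contribution is needed.

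In the descending case, your assertion that $T\cap xV$ is divisorial is unsupported. The natural argument needs elements of $(T:I)$ that have $V$-value $-w_V(x)$ and lie in all the other defining rings. That approximation property fails for infinite intersections: \cref{lem:limiting} shows that an infinite family can force membership in a further valuation ring. The paper instead filters by closed points of $\mathbf P^1_{F_0}$ over a field of cardinality $\aleph_\omega$, so that the base prime vanishes in the limit (\cref{lem:base-prime-vanishes}). It then pulls back along a rank-one nondiscrete valuation with residue field $K_0$ (\cref{lem:outer-pullback}). The limit prime becomes $N$, which is divisorial because $(R_\infty:N)=V$ and $(R_\infty:V)=N$.
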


All constructions are explicit intersections of valuation domains.  A
rank-one valuation supplies the tracked prime, auxiliary valuations control
the two colons in \eqref{eq:v-operation} and produce the parity rule, and
residue-field modifications change the limit behavior without changing that
rule.

The proof has three parts.  In
\cref{sec:preliminaries,sec:valuations,sec:chain,sec:divisoriality} we develop
the valuation-intersection machinery, construct the bilateral chain, and
prove its parity rule.  Sections~\ref{sec:positive-limit}
and~\ref{sec:negative-limit} compute the two nondivisorial limits.  Finally,
\cref{sec:divisorial-positive-limit,sec:large-avoidance,%
sec:divisorial-negative-limit} construct the two divisorial limits and
complete the proof of \cref{thm:limit-alternatives}.

\section{Preliminaries}\label{sec:preliminaries}

We begin with a valuation-intersection lemma.  Its elementary approximation
argument is also used later to identify a localization.

\begin{lemma}[Valuation intersections]\label{lem:intersection}
Let $k$ be an infinite field of cardinality $\kappa$, let $L/k$ be a field
extension, and let $(V_i)_{i\in I}$ be valuation rings of $L$, trivial on
$k$, with $|I|<\kappa$.  Then
\[
 D=\bigcap_{i\in I}V_i
\]
is a B\'ezout domain with quotient field $L$.
\end{lemma}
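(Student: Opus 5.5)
The plan is to show first that $D$ has quotient field $L$, and then that every two-generated nonzero ideal $(a,b)$ of $D$ is principal. An induction on the number of generators then makes every finitely generated ideal principal. The tool throughout is a single approximation device: in a valuation ring $V$ trivial on $k$, the elements of $k^\times$ are units, and for fixed $x\in L$ at most one $c\in k$ can make $x-c$ small. The cardinality hypothesis $|I|<\kappa$ leaves enough scalars in $k$ to avoid every bad value at once.

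\emph{Quotient field.} Take $x\in L$ and set $B_1=\{i: v_i(x)<0\}$. For $c\in k$ and $i\in B_1$ we have $v_i(x+c)=v_i(x)<0$, and $x+c$ is a unit of $V_i$ except for at most one $c$. I would like to write $x=y/z$ with $y,z\in D$. A natural attempt is $z=(x-c)^{-1}$-type expressions. Concretely, for $c\in k$ with $c\neq 0$ and $c$ avoiding the single bad value for each $i$, put
\[
 z=\frac{1}{1+c x}\quad\text{or}\quad z=\frac{1}{x^2+x+c}
\]
and arrange that both $z$ and $xz$ lie in every $V_i$. Take $u=(1+cx)^{-1}$ and $xu$. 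If $v_i(x)<0$, then $v_i(u)=-v_i(x)>0$ and $v_i(xu)=0$. If $v_i(x)\ge 0$, then $1+cx$ is a unit of $V_i$ unless the residue of $x$ equals $-1/c$. This excludes at most one $c$ per index, and there are fewer than $\kappa$ indices, so some $c\in k^\times$ works for all $i$. Then $u,xu\in D$, $u\neq 0$, and $x=(xu)/u$.

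\emph{Two-generated ideals.} Let $a,b\in D$ be nonzero and $t=a/b$. The idea is to show $(a,b)=(a+cb)$ for a suitable $c\in k$. Choose $c$ so that for every $i$ we have $v_i(t+c)=\min(v_i(t),0)$. This holds automatically when $v_i(t)\neq 0$. When $v_i(t)=0$ it fails only if the residue of $t$ is $-c$, which again excludes at most one $c$ per $i$; the same cardinality count produces $c$. Then $v_i(a+cb)=\min(v_i(a),v_i(b))$ for all $i$. Hence $a/(a+cb)$ and $b/(a+cb)$ have nonnegative value at every $V_i$, so they lie in $D$. Therefore $(a,b)\subseteq(a+cb)\subseteq(a,b)$, and the ideal is principal. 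The quotient-field step is in fact a special case of this argument, applied to $(1,x)$ as a fractional ideal of $V_i$-type data.

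I expect the main obstacle to be bookkeeping rather than depth. The phrase ``at most one bad $c$'' must be justified through residue fields: if $v_i(t+c)>0$ and $v_i(t+c')>0$, then $v_i(c-c')>0$, which forces $c=c'$ since $v_i$ is trivial on $k$. The count of bad scalars must also stay below $\kappa=|k|$ when $I$ is infinite, and this is exactly where $|I|<\kappa$ is needed, since the union of $|I|$ singletons has cardinality at most $|I|<\kappa$. Finally, the empty or finite index cases are covered because $k$ is infinite.
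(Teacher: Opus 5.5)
Your proposal is correct and follows essentially the paper's proof: choose one scalar $c\in k^\times$ that avoids, via the count $|I|<\kappa$, the single bad residue for each valuation, so that $v_i(a+cb)=\min(v_i(a),v_i(b))$; the quotient-field step is the same device applied to $1/(1+cx)$ and $x/(1+cx)$. In the two-generator step you must also exclude $c=0$ explicitly, as you did in the first step, because for $c=0$ the claimed identity $v_i(t+c)=\min(v_i(t),0)$ fails whenever $v_i(t)>0$.
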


\begin{proof}
The assertion is immediate if the family is empty, so assume otherwise and
write $v_i$ for the valuation belonging to $V_i$.  Fix $z\in L^\times$.
For a fixed $i$ with $v_i(z)=0$, there is at most one $c\in k$ for which
$v_i(c+z)>0$: triviality on $k$ makes the residue map
$k\longrightarrow\kappa(v_i)$ injective, so two distinct constants cannot
cancel the residue of $z$.  Fewer than $\kappa$ constants are therefore
excluded by all the valuations.  After also excluding $0$ and, if $z\in k$,
$-z$, choose $c\in k^\times$ such that $c+z\neq0$ and
\begin{equation}\label{eq:avoid-zero}
 v_i(c+z)=0\quad\text{whenever }v_i(z)=0.
\end{equation}
If $v_i(z)\neq0$, the two summands $c$ and $z$ have distinct values, because
$v_i(c)=0$; hence the ultrametric inequality is an equality.  Together with
\cref{eq:avoid-zero}, this gives
\begin{equation}\label{eq:min-value}
 v_i(c+z)=\min\{0,v_i(z)\}\qquad(i\in I).
\end{equation}
It follows that
\[
 \frac1{c+z},\qquad\frac z{c+z}
\]
belong to every $V_i$, and therefore to $D$.  Their quotient is $z$, so
$\qf(D)=L$.

Now let $0\neq a,b\in D$, put $z=b/a$, choose $c$ as above, and set
$d=ca+b=a(c+z)$.  For every $i$, \cref{eq:min-value} yields
\[
 v_i(d)=v_i(a)+\min\{0,v_i(b)-v_i(a)\}
       =\min\{v_i(a),v_i(b)\}.
\]
Thus $a/d,b/d\in D$, so $a,b\in dD$.  Conversely,
$d=ca+b\in aD+bD$.  Hence $aD+bD=dD$.  Induction shows that every
finitely generated ideal is principal, and $D$ is B\'ezout.
\end{proof}

We will also use the standard extension property for a prime of a Pr\"ufer
overring.  A proof is included to fix all contractions and localizations.

\begin{lemma}[Primes in Pr\"ufer overrings]\label{lem:prime-extension}
Let $D$ be a Pr\"ufer domain and
$D\subseteq E\subseteq\qf(D)$.  If $\mathfrak q\in\Spec(E)$ and
$\mathfrak p=\mathfrak q\cap D$, then
\begin{equation}\label{eq:prime-extension}
 \mathfrak q=\mathfrak pE.
\end{equation}
\end{lemma}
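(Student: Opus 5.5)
The inclusion $\mathfrak pE\subseteq\mathfrak q$ is immediate, since $\mathfrak p\subseteq\mathfrak q$ and $\mathfrak q$ is an ideal of $E$. The content is the reverse inclusion. My plan is to reduce it to localizations and use the fact that a Pr\"ufer domain localized at a prime is a valuation domain. The first step is to show $E_{\mathfrak q}=D_{\mathfrak p}$. Since $D\setminus\mathfrak p\subseteq E\setminus\mathfrak q$, we have $D_{\mathfrak p}\subseteq E_{\mathfrak q}$. Because $D$ is Pr\"ufer, $D_{\mathfrak p}$ is a valuation domain of $\qf(D)$. The ring $E_{\mathfrak q}$ is an overring of it, so it is a localization of $D_{\mathfrak p}$ at some prime $\mathfrak p'D_{\mathfrak p}$ with $\mathfrak p'\subseteq\mathfrak p$. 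Its maximal ideal $\mathfrak qE_{\mathfrak q}$ contracts to $\mathfrak p'$ in $D$. It also contracts to $\mathfrak q\cap D=\mathfrak p$. Hence $\mathfrak p'=\mathfrak p$, which gives $E_{\mathfrak q}=D_{\mathfrak p}$ and $\mathfrak qE_{\mathfrak q}=\mathfrak pD_{\mathfrak p}$.

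The second step, which I expect to be the main obstacle, is to pass from this local equality to the global equality $\mathfrak q=\mathfrak pE$ inside $E$. I would take $x\in\mathfrak q$ and show $x\in\mathfrak pE$ directly with a two-element Pr\"ufer trick. Since $E\subseteq\qf(D)$, write $x=a/b$ with $a,b\in D$ and $b\neq0$.

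Consider the finitely generated ideal $J=aD+bD$. It is invertible because $D$ is Pr\"ufer, so $1=\sum_i u_iv_i$ with $u_i\in J$ and $v_i\in J^{-1}$. Then $a v_i\in D$ and $bv_i\in D$, and we can write
\[
  x=\sum_i u_iv_i x=\sum_i \frac{u_i}{b}\,(a v_i).
\]
Each $u_i/b\in (aD+bD)/b=xD+D\subseteq E$. It then suffices to show $av_i\in\mathfrak p$, which follows once $av_i\in\mathfrak q$, since $av_i\in D$.

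Now $av_i=x\cdot(bv_i)$ with $x\in\mathfrak q$ and $bv_i\in D\subseteq E$. So $av_i\in\mathfrak q\cap D=\mathfrak p$. Therefore $x\in\mathfrak pE$. This argument in fact avoids the localization step entirely, and I would present it as the proof, keeping the localization remark only as motivation.

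The only point needing care is the identity $u_i/b\in E$. It holds because $u_i=\alpha a+\beta b$ with $\alpha,\beta\in D$, so $u_i/b=\alpha x+\beta\in E$.
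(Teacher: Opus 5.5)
Your proof is correct, and it takes a genuinely different route from the paper's.

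The paper argues in three steps:
\begin{enumerate}
 \item It proves that every overring of a Pr\"ufer domain is flat: overrings of a valuation domain are localizations, and flatness is local.
 \item It localizes at $D\setminus\mathfrak p$ to get $(D\setminus\mathfrak p)^{-1}E=D_{\mathfrak p}$ and $(D\setminus\mathfrak p)^{-1}\mathfrak q=\mathfrak pD_{\mathfrak p}=(D\setminus\mathfrak p)^{-1}(\mathfrak pE)$.
 \item It descends this local equality to $E$. Since $E/\mathfrak pE$ is flat, hence torsion-free, over the domain $D/\mathfrak p$, a relation $sx\in\mathfrak pE$ with $s\notin\mathfrak p$ forces $x\in\mathfrak pE$.
\end{enumerate}

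Your motivational first step is essentially the paper's localization step. Where the paper needs flatness to pass from local to global, you bypass it. You use the identity $x=\sum_i (u_i/b)(av_i)$, which comes from $1\in(aD+bD)(aD+bD)^{-1}$, together with $u_i/b\in D+xD\subseteq E$ and $av_i=x(bv_i)\in\mathfrak q\cap D$.

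Your argument is shorter, fully elementwise, and uses only invertibility of two-generated ideals. It also never uses that $\mathfrak q$ is prime, so it proves the stronger classical fact that every ideal $\mathfrak a$ of $E$ satisfies $\mathfrak a=(\mathfrak a\cap D)E$. The paper's argument does use primality, when it identifies $(D\setminus\mathfrak p)^{-1}\mathfrak q$ with the maximal ideal of $D_{\mathfrak p}$. In exchange it yields two structural by-products: overrings of Pr\"ufer domains are flat, and $(D\setminus\mathfrak p)^{-1}E=D_{\mathfrak p}$.
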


\begin{proof}
We first recall why every overring of a Pr\"ufer domain is flat.  Let $V$ be
a valuation domain and let $V\subseteq C\subseteq\qf(V)$.  The intermediate
ring $C$ is itself a valuation domain: for $0\neq x\in\qf(V)$, either
$x\in V\subseteq C$ or $x^{-1}\in V\subseteq C$.  Let $\mathfrak n$ be the
maximal ideal of $C$ and put $\mathfrak r=\mathfrak n\cap V$.  Then
$\mathfrak r$ is prime.  Every element of $V\setminus\mathfrak r$ is a unit
of $C$, so $V_{\mathfrak r}\subseteq C$.  Conversely, if
$x\in C\setminus V$, then $x^{-1}\in V$; because $x$ is its inverse in $C$,
$x^{-1}\notin\mathfrak n$, and hence $x^{-1}\notin\mathfrak r$.  Therefore
$x\in V_{\mathfrak r}$, proving $C=V_{\mathfrak r}$.  Thus every overring of
a valuation domain is flat.  For each maximal ideal $N$ of $D$, the ring
$D_N$ is a valuation domain and $E_{D\setminus N}$ is an overring of $D_N$.
Flatness is local, so $E$ is flat over $D$.

Set $S=D\setminus\mathfrak p$ and $V=D_{\mathfrak p}$.  By the preceding
paragraph, $S^{-1}E=V_{\mathfrak r}$ for some prime
$\mathfrak r\subseteq\mathfrak pV$.  The prime
$S^{-1}\mathfrak q$ survives and its contraction to $V$ is
$\mathfrak pV$.  Every element of $V\setminus\mathfrak pV$ is already
inverted in $S^{-1}E$, so $\mathfrak r\subseteq\mathfrak pV$; on the other
hand, the contraction to $V$ of any prime of $V_{\mathfrak r}$ is contained
in $\mathfrak r$.  Hence $\mathfrak pV\subseteq\mathfrak r$, and consequently
\[
 \mathfrak r=\mathfrak pV,\qquad S^{-1}E=V,
 \qquad S^{-1}\mathfrak q=\mathfrak pV.
\]
Moreover,
$S^{-1}(\mathfrak pE)=\mathfrak p(S^{-1}E)=\mathfrak pV$.
If $x\in\mathfrak q$, there is therefore an $s\in S$ such that
$sx\in\mathfrak pE$.  Since $E$ is flat over $D$, the
$D/\mathfrak p$-module $E/\mathfrak pE$ is flat and hence torsion-free.
The class of $s$ in the domain $D/\mathfrak p$ is nonzero, so
$s(x+\mathfrak pE)=0$ forces $x\in\mathfrak pE$.  This proves
$\mathfrak q\subseteq\mathfrak pE$; the reverse inclusion follows from
$\mathfrak p\subseteq\mathfrak q$.
\end{proof}

\section{The field and the valuation families}\label{sec:valuations}

Fix an uncountable field $k$ of characteristic zero.  Let
\[
 \Lambda=\Z\sqcup\{\ast\},
 \qquad \Gamma=\bigoplus_{i\in\Lambda}\Q e_i.
\]
The group $\Gamma$ admits a translation-invariant total order: enumerate the
countable set $\Lambda$ and order two elements lexicographically at their
first unequal coordinate.  With respect to this order, the least term of a
product in the group algebra $k[\Gamma]$ is the product of the two least
terms.  It follows that $k[\Gamma]$ is a domain.  Write $X_i^r$ for the basis
element corresponding to $re_i$, and put
\begin{equation}\label{eq:fields}
 F=\operatorname{Frac}(k[\Gamma]),
 \qquad K=\operatorname{Frac}(F[T^{\Q}]).
\end{equation}
The same least-term argument shows directly that $F[T^{\Q}]$ is a domain.
Every particular element of $F$ involves only finitely many of the variables
$X_i$, and every element of $K$ involves only finitely many rational powers
of $T$.

For $i\in\Lambda$, let
\[
 u_i:F^\times\longrightarrow\Q
\]
be the monomial valuation determined by
\begin{equation}\label{eq:coordinate-valuation}
 u_i(X_i^r)=r,
 \qquad u_i(X_j^r)=0\quad(j\neq i).
\end{equation}
More explicitly, group the finitely many terms of a nonzero element of
$k[\Gamma]$ according to their $i$th exponent and take the least exponent
whose homogeneous part is nonzero.  The product of the two least homogeneous
parts is nonzero, since the group algebra on
$\bigoplus_{j\neq i}\Q e_j$ is a domain.  Thus this rule is multiplicative,
and it extends uniquely to the stated valuation on $F$.

Let
\[
 v:K^\times\longrightarrow\Q
\]
be the analogous $T$-exponent valuation.  Its valuation ring and maximal
ideal are
\begin{equation}\label{eq:A-M}
 A=\{z\in K:v(z)\geq0\},
 \qquad M=\{z\in K:v(z)>0\}.
\end{equation}
The value group is $\Q$ and the residue field is $F$.  Hence $A$ is a
rank-one nondiscrete valuation domain.  We extend every valuation to zero by
the value $+\infty$.

For $0\neq z\in K$, set $q=v(z)$.  The image of $T^{-q}z$ in the residue
field $F$ is the \emph{angular component} of $z$, denoted $\ac(z)$.  For each
$i\in\Lambda$, composition of $v$ with the residue valuation $u_i$ gives the
valuation
\[
 \beta_i(z)=\bigl(v(z),u_i(\ac(z))\bigr)
       \in\Q\mathbin{\mathrm{lex}}\Q.
\]
Its valuation ring is
\begin{equation}\label{eq:B-i}
 B_i=\{z:v(z)>0\}\ \cup\
     \{z:v(z)=0,\ u_i(\ac(z))\geq0\}.
\end{equation}
In particular, $B_i\subseteq A$.

We next construct valuation rings which asymptotically force membership in
$B_i$.  For $i\in\Lambda$ and $m\geq1$, begin with the assignments
\[
 T^r\longmapsto X_i^{mr}(1+Y)^r\quad(r\in\Q),
\]
where
\[
 (1+Y)^r=\sum_{h\geq0}\binom rhY^h\in k[[Y]].
\]
The coefficients exist because $k$ has characteristic zero, and the formal
binomial identity
$(1+Y)^r(1+Y)^s=(1+Y)^{r+s}$ makes the assignments compatible with
addition in $\Q$.  They therefore define an $F$-algebra homomorphism
$\sigma^0_{i,m}:F[T^{\Q}]\to F[[Y]]$.

This homomorphism is injective.  Every Laurent polynomial in $F[T^{\Q}]$
belongs to $F[T^{1/N},T^{-1/N}]$ for some $N\geq1$, and $T^{1/N}$ maps to
\[
 w=X_i^{m/N}(1+Y)^{1/N}.
\]
If $w$ were algebraic over $F$, then
\[
 Y=\left(w/X_i^{m/N}\right)^N-1
\]
would be algebraic over $F$, contradicting the transcendence of $Y$.
Thus $w$ is transcendental over $F$, so the restriction of $\sigma^0_{i,m}$
to every such Laurent polynomial ring is injective.  Hence
$\sigma^0_{i,m}$ is injective and extends uniquely to an embedding
\begin{equation}\label{eq:sigma}
 \sigma_{i,m}:K\longrightarrow F((Y)),
 \qquad T^r\longmapsto X_i^{mr}(1+Y)^r.
\end{equation}
We will also use the resulting identity
\begin{equation}\label{eq:K-union}
 K=\bigcup_{N\geq1}F(T^{1/N}).
\end{equation}

Let $d_{i,m}=\operatorname{ord}_Y\circ\sigma_{i,m}$.  This is a discrete
valuation on $K$ with residue field $F$: the residue of a zero-valued element
is its nonzero constant term, and constant elements realize all of $F$.
Compose $d_{i,m}$ with $u_i$ on this residue field.  We denote the resulting
valuation ring by
\begin{equation}\label{eq:W-i-m}
 W_{i,m}=\{z:d_{i,m}(z)>0\}\ \cup\
 \{z:d_{i,m}(z)=0,\ u_i(\overline z)\geq0\},
\end{equation}
where $\overline z\in F^\times$ is the residue of $z$.  The element
$\pi_{i,m}=TX_i^{-m}-1$ maps to $Y$, while constants with arbitrary
$u_i$-value have $d_{i,m}$-value zero.  Thus the composite has value group
$\Z\mathbin{\mathrm{lex}}\Q$.  If $z\in F(T^{1/N})$, write $z(X_i^m)$ for
the result of evaluating $T^{1/N}$ at $X_i^{m/N}$ whenever a chosen
denominator is nonzero.  This value is independent of $N$ and of the
rational presentation, because the rational powers are compatible and
evaluation is a homomorphism on its local domain.  If $z$ has neither a zero
nor a pole under this evaluation, then $d_{i,m}(z)=0$ and
\begin{equation}\label{eq:ordinary-specialization}
 \overline z=z(X_i^m).
\end{equation}
All valuations defining $A$, $B_i$, and $W_{i,m}$ are trivial on $k$.

\begin{lemma}[Limiting implication]\label{lem:limiting}
For every $i\in\Lambda$,
\begin{equation}\label{eq:limiting-inclusion}
 A\cap\bigcap_{m\geq1}W_{i,m}\subseteq B_i.
\end{equation}
\end{lemma}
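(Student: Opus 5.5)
We argue by contraposition on the angular component. Let $z\in A\cap\bigcap_{m\geq1}W_{i,m}$ with $z\neq0$. If $v(z)>0$ then $z\in B_i$ by \eqref{eq:B-i}, so we may assume $v(z)=0$. It remains to show $u_i(\ac(z))\geq0$. The plan is to find one large $m$ for which the residue of $z$ in $W_{i,m}$ has the same $u_i$-value as $\ac(z)$. Then membership in $W_{i,m}$ forces that value to be nonnegative.

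By \eqref{eq:K-union}, write $z=f/g$ with $f,g\in F[T^{1/N},T^{-1/N}]$ nonzero, say
\[
 f=\sum_{r\in R_f}a_rT^r,\qquad g=\sum_{r\in R_g}b_rT^r,
\]
with finite exponent sets $R_f,R_g\subseteq\tfrac1N\Z$ and all listed coefficients nonzero. Let $\alpha=\min R_f$ and $\gamma=\min R_g$. Since $v(z)=0$, we have $\alpha=\gamma$, and by definition of the angular component, $\ac(z)=a_\alpha/b_\alpha$.

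The key step is to evaluate at $T\mapsto X_i^m$. Under this evaluation each term $a_rT^r$ becomes $a_rX_i^{mr}$, which has $u_i$-value $u_i(a_r)+mr$. Distinct exponents in $R_f$ differ by at least $1/N$, and there are only finitely many of them. Hence for all sufficiently large $m$ the term with $r=\alpha$ has strictly smaller $u_i$-value than every other term. By the ultrametric equality, for such $m$,
\[
 u_i\bigl(f(X_i^m)\bigr)=u_i(a_\alpha)+m\alpha .
\]
In particular $f(X_i^m)\neq0$. The same holds for $g$ with $b_\alpha$, and we fix one $m$ that works for both.

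For this $m$, the image $\sigma_{i,m}(f)=\sum a_rX_i^{mr}(1+Y)^r$ has constant term $f(X_i^m)\neq0$, so $d_{i,m}(f)=0$; likewise $d_{i,m}(g)=0$. Thus $z$ has neither zero nor pole under the evaluation, and \eqref{eq:ordinary-specialization} gives $d_{i,m}(z)=0$ and $\overline z=z(X_i^m)$. Consequently
\[
 u_i(\overline z)=u_i(a_\alpha)-u_i(b_\alpha)=u_i(\ac(z)).
\]
Since $z\in W_{i,m}$ and $d_{i,m}(z)=0$, \eqref{eq:W-i-m} gives $u_i(\overline z)\geq0$. Hence $u_i(\ac(z))\geq0$, and so $z\in B_i$.

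The only delicate point is the dominance step: we must guarantee that the lowest $T$-exponent term strictly wins in $u_i$-value after substitution. This uses that the exponents lie in a lattice $\tfrac1N\Z$ and are finite in number, so the linear growth $m(r-\alpha)$ eventually exceeds every bounded difference $u_i(a_\alpha)-u_i(a_r)$. Everything else follows from the definitions and the specialization identity already established.
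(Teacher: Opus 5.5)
Your proof is correct and takes essentially the same route as the paper: write $z$ as a quotient of Laurent polynomials in $T^{1/N}$, show that the lowest-exponent terms strictly dominate in $u_i$-value after substituting $T=X_i^m$ for large $m$, and conclude $u_i(\overline z)=m\,v(z)+u_i(\ac(z))$, which equals $u_i(\ac(z))$ when $v(z)=0$. Your opening phrase ``by contraposition'' is a misnomer, since the argument you give is direct, but this does not affect correctness.
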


\begin{proof}
Fix $0\neq z\in K$.  Choose $N\geq1$ such that $z\in F(Z)$, where
$Z=T^{1/N}$, and write
\[
 z=\frac{f(Z)}{g(Z)},\qquad
 f(Z)=\sum_{\ell=r_0}^{r_1}a_\ell Z^\ell,
 \qquad g(Z)=\sum_{\ell=s_0}^{s_1}b_\ell Z^\ell,
\]
with $a_{r_0}b_{s_0}\neq0$.  Then
\begin{equation}\label{eq:v-ac-polynomials}
 v(z)=\frac{r_0-s_0}{N},
 \qquad \ac(z)=\frac{a_{r_0}}{b_{s_0}}.
\end{equation}
For every $\ell>r_0$ with $a_\ell\neq0$,
\begin{align*}
 &u_i\bigl(a_\ell X_i^{m\ell/N}\bigr)
   -u_i\bigl(a_{r_0}X_i^{mr_0/N}\bigr)\\
 &\hspace{4em}=u_i(a_\ell)-u_i(a_{r_0})
       +\frac{m(\ell-r_0)}N,
\end{align*}
which is positive for all sufficiently large $m$.  Hence the $r_0$-term is
the unique term of least $u_i$-value in $f(X_i^{m/N})$ for all large $m$;
in particular, this evaluation is nonzero.  The same argument applies to
$g$ and its $s_0$-term.  Therefore $z$ has neither a zero nor a pole at
$T=X_i^m$ for all sufficiently large $m$, and
\begin{align}
 u_i\bigl(z(X_i^m)\bigr)
 &=u_i(a_{r_0})+\frac{mr_0}N
   -u_i(b_{s_0})-\frac{ms_0}N \notag\\
 &=m v(z)+u_i(\ac(z)).\label{eq:asymptotic-value}
\end{align}

Now suppose $z$ belongs to the left side of
\cref{eq:limiting-inclusion}.  If $v(z)>0$, then $z\in B_i$ by
\cref{eq:B-i}.  If $v(z)=0$, membership in $W_{i,m}$, together with
\cref{eq:ordinary-specialization,eq:asymptotic-value} for large $m$, gives
$u_i(\ac(z))\geq0$.  Again $z\in B_i$ by \cref{eq:B-i}.  The case $z=0$ is
automatic.
\end{proof}

\section{The bilateral chain}\label{sec:chain}

Put
\[
 C_\ast=\bigcap_{m\geq1}W_{\ast,m}.
\]
For every $n\in\Z$, define
\begin{align}
 H_n&=A\cap C_\ast\cap
       \bigcap_{\substack{i\in\Z,\ i>n\\m\geq1}}W_{i,m},
       \label{eq:H-n}\\
 E_n&=B_n\cap C_\ast\cap
       \bigcap_{\substack{i\in\Z,\ i>n\\m\geq1}}W_{i,m},
       \label{eq:E-n}\\
 S_{2n}&=E_n,\qquad S_{2n+1}=H_n.\label{eq:S-j}
\end{align}
Each ring in \cref{eq:S-j} is the intersection of a countable family of
valuation rings of $K$ whose valuations are trivial on the uncountable field
$k$.  By \cref{lem:intersection}, every $S_j$ is a B\'ezout domain with
quotient field $K$, and hence is a Pr\"ufer domain.

\begin{proposition}\label{prop:strict-chain}
For every $n\in\Z$,
\[
 E_n\subsetneq H_n\subsetneq E_{n+1}.
\]
Consequently, $(S_j)_{j\in\Z}$ is a strictly increasing bi-infinite chain.
Every $S_j$ is a proper subring of $A$.
\end{proposition}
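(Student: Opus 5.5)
Each of the four inclusions follows by comparing the defining intersections in \cref{eq:H-n,eq:E-n}, and each strictness is witnessed by an explicit element of $K$.

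\emph{Inclusions.} $E_n\subseteq H_n$ holds because $B_n\subseteq A$ and the remaining factors coincide. For $H_n\subseteq E_{n+1}$, note that $H_n$ contains the factor $A\cap\bigcap_{m\geq1}W_{n+1,m}$, which lies in $B_{n+1}$ by \cref{lem:limiting}. Every other factor of $E_{n+1}$, namely $C_\ast$ and the $W_{i,m}$ with $i>n+1$, already appears in $H_n$. Finally, each $S_j$ lies in $A$, since $E_n\subseteq B_n\subseteq A$.

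\emph{Strictness.} To show $E_n\subsetneq H_n$, take $z=X_n^{-1}\in F$. Then $v(z)=0$ and $\ac(z)=X_n^{-1}$, so $u_n(\ac(z))=-1<0$ and $z\notin B_n$, although $z\in A$. For $i\neq n$ (including $i=\ast$), $z$ is a constant with $d_{i,m}(z)=0$ and residue $X_n^{-1}$, whose $u_i$-value is $0$. Hence $z\in W_{i,m}$ for all such $i$ and $m$, and so $z\in H_n\setminus E_n$.

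To show $H_n\subsetneq E_{n+1}$, take $z=X_{n+1}^{-1}$. It lies in $B_{n+1}$? No: $u_{n+1}(X_{n+1}^{-1})=-1$, so this choice fails. Instead I need an element of $E_{n+1}$ outside some $W_{n+1,m}$, and I would try $z=T\,X_{n+1}^{-1}$. Its value is $v(z)=1>0$, so $z\in B_{n+1}$. For $i>n+1$ or $i=\ast$, we have $\sigma_{i,m}(z)=X_i^m(1+Y)X_{n+1}^{-1}$, which has $d_{i,m}$-value $0$ and residue $X_i^mX_{n+1}^{-1}$ with $u_i$-value $m\geq0$. Hence $z\in W_{i,m}$ and so $z\in E_{n+1}$. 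On the other hand, $\sigma_{n+1,1}(z)=1+Y$ has residue $1$ with $u_{n+1}$-value $0$, so this choice also fails to exclude $z$.

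I would therefore take $z=T\,X_{n+1}^{-2}$. Then $\sigma_{n+1,1}(z)=X_{n+1}^{-1}(1+Y)$ has residue $X_{n+1}^{-1}$, of $u_{n+1}$-value $-1$, so $z\notin W_{n+1,1}\supseteq H_n$. Membership $z\in E_{n+1}$ still holds: $v(z)=1>0$ gives $z\in B_{n+1}$, and for $i>n+1$ or $i=\ast$ the residue $X_i^mX_{n+1}^{-2}$ has $u_i$-value $m\geq0$.

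\emph{Properness in $A$.} Since the chain is increasing, $S_j\subsetneq A$ follows once each $H_n\neq A$. The element $X_{n+1}^{-1}$ lies in $A$, but its residue under $W_{n+1,m}$ has $u_{n+1}$-value $-1$, so it is excluded from $W_{n+1,m}\supseteq H_n$.

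\emph{Main obstacle.} The only genuine difficulty is choosing the witnesses so that the auxiliary rings $W_{i,m}$ with $i\neq n,n+1$ do not exclude them. Computing residues via \cref{eq:ordinary-specialization}, with $T\mapsto X_i^m$, handles this directly.
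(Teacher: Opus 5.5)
Your proof is correct and follows the paper's argument essentially verbatim: the inclusions come from $B_n\subseteq A$ and \cref{lem:limiting}, and the strictness and properness witnesses are the same elements $X_n^{-1}$, $TX_{n+1}^{-2}$, and $X_{n+1}^{-1}$. In a final write-up, drop the exploratory false starts with $X_{n+1}^{-1}$ and $TX_{n+1}^{-1}$, and state the consequence $S_{2n}\subsetneq S_{2n+1}\subsetneq S_{2n+2}$ explicitly.
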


\begin{proof}
Since $B_n\subseteq A$, the definitions give $E_n\subseteq H_n$.  The
element $X_n^{-1}$ belongs to $A$.  It has value zero under $u_\ast$ and
under every $u_i$ with $i>n$, so it is a unit in every valuation ring
occurring in \cref{eq:H-n}.  Hence
\begin{equation}\label{eq:first-witness}
 X_n^{-1}\in H_n.
\end{equation}
On the other hand, $v(X_n^{-1})=0$ and
$u_n(\ac(X_n^{-1}))=-1$, so $X_n^{-1}\notin B_n$.  Thus
$E_n\subsetneq H_n$.

The definition of $H_n$ includes every $W_{n+1,m}$.  By
\cref{lem:limiting}, $H_n\subseteq B_{n+1}$.  Removing the family
$(W_{n+1,m})_{m\geq1}$ leaves precisely the remaining conditions in
\cref{eq:E-n} for $E_{n+1}$; hence $H_n\subseteq E_{n+1}$.

Set $x=TX_{n+1}^{-2}$.  Since $v(x)=1$, we have $x\in B_{n+1}$.  At
$W_{\ast,m}$ its specialization has $u_\ast$-value $m>0$, and at
$W_{i,m}$ with $i>n+1$ its specialization has $u_i$-value $m>0$.
Thus $x\in E_{n+1}$.  At $W_{n+1,1}$, however, it has zero first component
and residue $X_{n+1}^{-1}$, whose $u_{n+1}$-value is $-1$.  Therefore
\begin{equation}\label{eq:second-witness}
 TX_{n+1}^{-2}\in E_{n+1}\setminus H_n,
\end{equation}
and $H_n\subsetneq E_{n+1}$.

Finally, $X_{n+1}^{-1}\in A$ but it is not in $W_{n+1,m}$ for any $m$.
Hence it does not belong to $H_n$, and therefore not to $E_n\subseteq H_n$.
Thus both $H_n$ and $E_n$ are proper subrings of $A$.
\end{proof}

Set
\begin{equation}\label{eq:P}
 P=M\cap S_0=M\cap E_0.
\end{equation}
This is a prime ideal, being the contraction of the prime $M$ of $A$.
It is nonzero because $T\in P$.

\begin{proposition}\label{prop:localization}
The localization of $S_0$ at $P$ is $A$.
\end{proposition}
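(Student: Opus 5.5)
We prove $(S_0)_P=A$ by establishing the two inclusions separately.

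\emph{The inclusion $(S_0)_P\subseteq A$.} Since $S_0=E_0\subseteq B_0\subseteq A$ and $P=M\cap S_0$, every element of $S_0\setminus P$ lies in $A\setminus M$. Such elements are units of $A$, so $(S_0)_P\subseteq A_M=A$.

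\emph{The inclusion $A\subseteq(S_0)_P$.} Fix $z\in A$. We look for a single $s\in S_0$ with $v(s)=0$ and $sz\in S_0$. Then $s\notin P$, and $z=(sz)/s$ exhibits $z\in(S_0)_P$. We split into cases.

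\emph{Case $v(z)>0$.} Here $z\in B_0$ automatically, so only the conditions at $C_\ast$ and at the $W_{i,m}$ with $i>0$ need to be arranged. Only finitely many variables $X_i$ occur in $z$. We take $s$ to be a suitable product of monomials $X_i^{-a}$ and then multiply $z$ by powers of $T$ adjusted by $X$-monomials, as in the witness $TX_{n+1}^{-2}$ of \cref{prop:strict-chain}. The problem is that an element with $v=0$ is a unit of $A$ but need not lie in $S_0$, and $B_0$ forces $u_0(\ac(s))\ge0$. The monomial approach should handle this case, but the case $v(z)=0$ needs a uniform mechanism.

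\emph{General case: an approximation argument.} I would redo the argument of \cref{lem:intersection}, which the paper says is reused to identify a localization. The relevant family is
\[
 \mathcal F=\{B_0,\ W_{\ast,m},\ W_{i,m}\ (i>0,\ m\ge1)\},
\]
which is countable, consists of valuations trivial on the uncountable field $k$, and satisfies $S_0=\bigcap_{V\in\mathcal F}V$. Choose $c\in k^\times$ avoiding the countably many bad constants for $z$. Then
\[
 w(c+z)=\min\{0,w(z)\}\qquad\text{for every valuation $w$ in }\mathcal F,
\]
so
\[
 s=\frac1{c+z}\in S_0,\qquad sz=\frac z{c+z}\in S_0.
\]
It remains to check that $v(s)=0$. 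We have $v(z)\ge0$, and for $B_0$ the first component of the composite valuation is $v$. The valuation $\beta_0$ of $B_0$ satisfies $\beta_0(c+z)=\min\{0,\beta_0(z)\}$, so its first component gives $v(c+z)=\min\{0,v(z)\}=0$ when $v(z)\ge0$.

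This also handles $v(z)=0$: $\ac(c+z)=c+\ac(z)$ with $c$ generic, and the needed genericity is precisely the avoidance condition for $B_0$. Since $s\in S_0$ and $v(s)=0$, we get $s\notin M$, so $s\notin P$. Hence $z=(sz)/s\in(S_0)_P$.

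\emph{Main obstacle.} The delicate point is confirming that the exclusion count works for the composite valuations $B_0$, $W_{i,m}$ and $W_{\ast,m}$. For each such valuation with value $0$ at $z$, its residue field must contain $k$ injectively, so that at most one constant is bad; this holds because all of these valuations are trivial on $k$. Once that is checked, the result follows directly from the lemma's proof.
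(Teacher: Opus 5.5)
Your proof is correct and is essentially the paper's argument: both apply the avoidance step from the proof of \cref{lem:intersection} to the countable family $B_0$, $W_{\ast,m}$, $W_{i,m}$ ($i>0$). The paper adds $v$ itself to that family to get $v(c+z)=0$, whereas you read $v(c+z)=\min\{0,v(z)\}=0$ off the first lexicographic component of $\beta_0(c+z)=\min\{0,\beta_0(z)\}$, which works equally well. The abandoned ``Case $v(z)>0$'' paragraph is superfluous and can be deleted, since the approximation argument already covers every $z\in A$.
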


\begin{proof}
Because $S_0\subseteq A$ and every element of $S_0\setminus P$ has
$v$-value zero, $(S_0)_P\subseteq A$.

Conversely, fix $z\in A$.  Apply the avoidance argument in the proof of
\cref{lem:intersection} simultaneously to $v$ and to the countable family
of valuations whose rings are
\begin{equation}\label{eq:S0-family}
 B_0,\qquad W_{\ast,m}\ (m\geq1),
 \qquad W_{i,m}\ (i>0,\ m\geq1).
\end{equation}
Thus we may choose $c\in k^\times$ with $c+z\neq0$ such that, for every
valuation $w$ in this augmented family,
\[
 w(c+z)=\min\{0,w(z)\}.
\]
Consequently
\[
 a=\frac1{c+z},\qquad b=\frac z{c+z}
\]
have nonnegative value for every valuation in \cref{eq:S0-family}; hence
$a,b\in S_0$.  Since $z\in A$, the additional condition for $v$ gives
$v(c+z)=0$, and therefore $v(a)=0$.  Thus $a\notin P$, while $z=b/a$.
It follows that $z\in(S_0)_P$.
\end{proof}

For every $j\in\Z$, define
\begin{equation}\label{eq:Q-j}
 Q_j=M\cap S_j.
\end{equation}
Each $Q_j$ is prime.  It is nonzero, since $T\in Q_j$: this also follows
from the stronger calculation in \cref{eq:T-e-universal} below.

\begin{proposition}[Compatibility of the primes]\label{prop:compatibility}
If $j\leq\ell$, then
\[
 Q_\ell\cap S_j=Q_j,
 \qquad Q_jS_\ell=Q_\ell.
\]
\end{proposition}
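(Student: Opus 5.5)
The first identity is formal, and the second will follow from \cref{lem:prime-extension} once we know $Q_\ell$ is prime in $S_\ell$. By \cref{prop:strict-chain}, $j\leq\ell$ gives $S_j\subseteq S_\ell\subseteq A$. Hence
\[
 Q_\ell\cap S_j=(M\cap S_\ell)\cap S_j=M\cap S_j=Q_j.
\]
No further input is needed for this half.

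For the extension statement, we use that $S_j$ is B\'ezout, hence Pr\"ufer, as noted after \cref{eq:S-j}, and that $S_\ell$ is an overring of $S_j$ with the same quotient field $K$. The ideal $Q_\ell=M\cap S_\ell$ is a prime of $S_\ell$, being the contraction of the prime $M$ of $A$. Applying \cref{lem:prime-extension} with $D=S_j$, $E=S_\ell$ and $\mathfrak q=Q_\ell$ gives $\mathfrak p=Q_\ell\cap S_j=Q_j$ by the first part. Therefore
\[
 Q_\ell=Q_jS_\ell .
\]

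I do not expect a real obstacle. The only points to check are that \cref{lem:prime-extension} needs nothing beyond $D$ Pr\"ufer and $D\subseteq E\subseteq\qf(D)$, which holds since every $S_j$ has quotient field $K$, and that nonzeroness of the primes plays no role in the identities. If one wanted a direct argument avoiding flatness, one could instead show that every $z\in Q_\ell$ factors as an element of $Q_j$ times an element of $S_\ell$. This would use the explicit witnesses $T^{e}$, which lie in every $S_j$ for suitable small $e>0$, as the forthcoming \cref{eq:T-e-universal} suggests. That route would require a valuation-by-valuation estimate and is unnecessary given \cref{lem:prime-extension}.
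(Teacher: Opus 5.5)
Your proof is correct and matches the paper's own argument exactly. The first identity follows from $S_j\subseteq S_\ell$ together with $Q_j=M\cap S_j$, and the second from \cref{lem:prime-extension}, applied to the Pr\"ufer domain $S_j$, its overring $S_\ell$, and the prime $Q_\ell$, whose contraction is $Q_j$.
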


\begin{proof}
The first equality is immediate from $S_j\subseteq S_\ell$ and
\cref{eq:Q-j}:
\[
 Q_\ell\cap S_j=(M\cap S_\ell)\cap S_j=M\cap S_j=Q_j.
\]
Apply \cref{lem:prime-extension} to the Pr\"ufer domain $S_j$, its overring
$S_\ell$, and the prime $Q_\ell$.  Its contraction is $Q_j$, so the lemma
gives $Q_\ell=Q_jS_\ell$.
\end{proof}

\section{The divisoriality calculation}\label{sec:divisoriality}

We first construct elements that will test all the ideal quotients.  For every
$e\in\Q$ with $e>0$,
\begin{equation}\label{eq:T-e-universal}
 T^e\in M\cap S_j\qquad(j\in\Z).
\end{equation}
Indeed, $v(T^e)=e>0$, so $T^e\in A$ and $T^e\in B_i$ for every $i$.
At $W_{i,m}$ its first value is zero and its residue is $X_i^{me}$, of
$u_i$-value $me>0$.  This proves \cref{eq:T-e-universal} from the defining
intersections.

For $i\in\Lambda$ and $m\geq1$, set
\begin{equation}\label{eq:p-i-m}
 p_{i,m}=TX_i^{-m}\prod_{s=1}^{m-1}(1-TX_i^{-s}),
\end{equation}
where the empty product for $m=1$ is $1$.  Since each factor in the product
has $v$-value zero and angular component $1$, we have $v(p_{i,m})=1$.
Consequently $p_{i,m}\in M\cap B_j$ for every $j\in\Lambda$.

We next check every auxiliary valuation ring.  Under $\sigma_{i,t}$, the
factor with $s=t$ is $-Y$ when $t<m$.  All other factors have nonzero
constant term.  When $t\geq m$, there is no vanishing factor, and direct
specialization gives the residue values shown below.  For $j\neq i$, none of
the factors vanishes and $u_j$ sees only the specialized $T$-power.  Thus
\begin{center}
\begin{tabular}{@{}lll@{}}
\toprule
valuation ring & first component & second component if the first is $0$\\
\midrule
$W_{i,t}$, $t<m$ & $1$ & ---\\
$W_{i,m}$ & $0$ & $0$\\
$W_{i,t}$, $t>m$ & $0$ & $t-m>0$\\
$W_{j,t}$, $j\neq i$ & $0$ & $t>0$\\
\bottomrule
\end{tabular}
\end{center}
For completeness, in the second and third rows the nonvanishing factors
$1-X_i^{t-s}$ have $u_i$-value zero; the prefactor has $u_i$-value $t-m$.
In the fourth row the factors $1-X_j^tX_i^{-s}$ have $u_j$-value zero and
the prefactor $X_j^tX_i^{-m}$ has $u_j$-value $t$.  Therefore
\begin{equation}\label{eq:p-properties}
 p_{i,m}\in M\cap
   \bigcap_{\substack{j\in\Lambda\\t\geq1}}W_{j,t},
 \qquad p_{i,m}\in W_{i,m}^{\times}.
\end{equation}

We repeatedly use two elementary facts.  If $D=\bigcap_\lambda V_\lambda$
inside a common quotient field and $I$ is a nonzero $D$-submodule of that
field, then
\begin{equation}\label{eq:colon-intersection}
 (D:I)=\bigcap_\lambda(V_\lambda:I),
\end{equation}
because both sides say that $xI$ lies in every $V_\lambda$.  Also, if
$I\subseteq V$ and $I$ contains a unit of the valuation ring $V$, then
\begin{equation}\label{eq:unit-colon}
 (V:I)=V.
\end{equation}
The inclusion $V\subseteq(V:I)$ is clear; if $p\in I\cap V^\times$ and
$xI\subseteq V$, then $xp\in V$ and hence $x\in V$.

\begin{proposition}[Odd stages]\label{prop:odd}
For every $n\in\Z$, the prime
$Q_{2n+1}=M\cap H_n$ is nondivisorial in $H_n$.
\end{proposition}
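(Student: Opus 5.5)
The plan is to show that $Q_{2n+1}^{-1}=(H_n:Q_{2n+1})$ equals $H_n$ itself. Then $Q_{2n+1}^{v_{H_n}}=(H_n:H_n)=H_n$. Since $1\notin M$, the prime $Q_{2n+1}$ is proper, so $Q_{2n+1}\neq H_n=Q_{2n+1}^{v_{H_n}}$, and nondivisoriality follows.

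The inclusion $H_n\subseteq(H_n:Q_{2n+1})$ is automatic. For the reverse inclusion, write $H_n$ as the intersection \cref{eq:H-n} of the valuation rings
\[
 A,\qquad W_{\ast,m}\ (m\geq1),\qquad W_{i,m}\ (i>n,\ m\geq1),
\]
and apply \cref{eq:colon-intersection}. This gives $(H_n:Q_{2n+1})=(A:Q_{2n+1})\cap\bigcap(W:Q_{2n+1})$, where $W$ runs over the auxiliary rings listed. It then suffices to show that each colon $(V:Q_{2n+1})$ equals $V$ for every ring $V$ in this family.

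\emph{Auxiliary rings.} Take $W=W_{j,m}$ with $j=\ast$ or $j>n$. By \cref{eq:p-properties}, the element $p_{j,m}$ lies in $M$ and in every $W_{j',t}$; it also lies in $A$ since $v(p_{j,m})=1$. Hence $p_{j,m}\in M\cap H_n=Q_{2n+1}$. The same display says $p_{j,m}\in W_{j,m}^\times$. Moreover $Q_{2n+1}\subseteq H_n\subseteq W_{j,m}$, so \cref{eq:unit-colon} yields $(W_{j,m}:Q_{2n+1})=W_{j,m}$.

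\emph{The ring $A$.} Here there is no unit available, since $Q_{2n+1}\subseteq M$. Instead I use nondiscreteness via \cref{eq:T-e-universal}: $T^e\in Q_{2n+1}$ for every rational $e>0$. Suppose $x\in(A:Q_{2n+1})$ has $v(x)<0$. Choose a rational $e$ with $0<e<-v(x)$. Then $v(xT^e)<0$, so $xT^e\notin A$, a contradiction. Hence $(A:Q_{2n+1})\subseteq A$, and the reverse inclusion holds because $Q_{2n+1}\subseteq A$. So $(A:Q_{2n+1})=A$.

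Intersecting all these colons gives $(H_n:Q_{2n+1})=H_n$, which completes the argument. I do not expect a genuine obstacle. The only point needing care is the $A$-component, where the unit-colon trick \cref{eq:unit-colon} is unavailable and one must use that $Q_{2n+1}$ contains elements of arbitrarily small positive $v$-value. For this the universal elements $T^e$ of \cref{eq:T-e-universal} suffice.
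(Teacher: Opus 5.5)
Your proposal is correct and follows the paper's proof essentially step for step. You take the colon through the defining intersection \eqref{eq:H-n}, use the elements $T^e$ of \eqref{eq:T-e-universal} to get $(A:Q_{2n+1})=A$, and use the units $p_{i,m}$ of \eqref{eq:p-properties} with \eqref{eq:unit-colon} for each auxiliary ring, concluding that $(H_n:Q_{2n+1})=H_n$ and hence $Q_{2n+1}^{v_{H_n}}=H_n\neq Q_{2n+1}$.
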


\begin{proof}
Write $Q=Q_{2n+1}$.  We first claim
\begin{equation}\label{eq:A-colon-odd}
 (A:Q)=A.
\end{equation}
Certainly $AQ\subseteq A$.  Conversely, if $x\notin A$, then $v(x)<0$.
Choose $e\in\Q$ with $0<e<-v(x)$.  By
\cref{eq:T-e-universal}, $T^e\in Q$, whereas
$v(xT^e)=v(x)+e<0$.  Thus $xQ\nsubseteq A$, proving
\cref{eq:A-colon-odd}.

Consider any valuation ring $W_{i,m}$ occurring in the defining intersection
\cref{eq:H-n}; thus either $i=\ast$, or $i\in\Z$ and $i>n$.  The element
$p_{i,m}$ has positive $v$-value and belongs to every auxiliary valuation
ring by \cref{eq:p-properties}.  Hence $p_{i,m}\in Q$, while it is a unit of
$W_{i,m}$.  Equation~\eqref{eq:unit-colon} gives
\begin{equation}\label{eq:W-colon-odd}
 (W_{i,m}:Q)=W_{i,m}.
\end{equation}
Equivalently, the reverse inclusion follows explicitly from
$xQ\subseteq W_{i,m}\Rightarrow xp_{i,m}\in W_{i,m}$ and multiplication by
$p_{i,m}^{-1}$.

Taking colons through \cref{eq:H-n} and using
\cref{eq:A-colon-odd,eq:W-colon-odd}, we obtain
\begin{align}
 (H_n:Q)
 &=(A:Q)\cap\bigcap_{m\geq1}(W_{\ast,m}:Q)
  \cap\bigcap_{\substack{i\in\Z,\ i>n\\m\geq1}}(W_{i,m}:Q)\notag\\
 &=H_n.\label{eq:H-colon-Q}
\end{align}
Therefore
\[
 Q^{v_{H_n}}=(H_n:(H_n:Q))=(H_n:H_n)=H_n.
\]
Since $1\notin Q$ whereas $T\in Q$, the prime $Q$ is proper and nonzero.
In particular $Q\neq H_n$, so $Q$ is nondivisorial.
\end{proof}

\begin{proposition}[Even stages]\label{prop:even}
For every $n\in\Z$, the prime $Q_{2n}=M\cap E_n$ is divisorial in $E_n$.
\end{proposition}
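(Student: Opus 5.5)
The plan is to compute both colons in the definition of $Q^{v_{E_n}}$ explicitly. As in \cref{prop:odd}, I will distribute the first colon over the defining intersection \cref{eq:E-n}, and then show that the second colon returns exactly $Q$.

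Write $Q=Q_{2n}=M\cap E_n$. The first step is to compute $(E_n:Q)$ via \cref{eq:colon-intersection}, using the valuation rings
\[
 B_n,\qquad W_{\ast,m},\qquad W_{i,m}\ (i>n).
\]

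\emph{The auxiliary rings.} For every $W_{i,m}$ occurring in \cref{eq:E-n}, the element $p_{i,m}$ has $v$-value $1$. It therefore lies in $M\cap B_n$, and by \cref{eq:p-properties} it lies in all the auxiliary rings, so $p_{i,m}\in Q$. It is also a unit of $W_{i,m}$, so \cref{eq:unit-colon} gives $(W_{i,m}:Q)=W_{i,m}$, exactly as in \cref{eq:W-colon-odd}.

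\emph{The ring $B_n$.} I claim $(B_n:Q)=A$, so the effect of the first colon is to replace $B_n$ by $A$.
\begin{itemize}
 \item[$\supseteq$:] If $v(x)\geq0$ and $q\in Q$, then $v(xq)>0$, so $xq\in M\subseteq B_n$ by \cref{eq:B-i}.
 \item[$\subseteq$:] If $v(x)<0$, choose $0<e<-v(x)$. Then $T^e\in Q$ by \cref{eq:T-e-universal}, and $v(xT^e)<0$, so $xT^e\notin B_n$.
\end{itemize}
Combining the three computations,
\[
 (E_n:Q)=A\cap C_\ast\cap\bigcap_{i>n,\,m\geq1}W_{i,m}=H_n .
\]

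\emph{The second colon.} It remains to show $(E_n:H_n)=Q$.
\begin{itemize}
 \item[$\supseteq$:] We have $QH_n\subseteq M$ because $Q\subseteq M$ and $H_n\subseteq A$. Hence $QH_n\subseteq B_n$. Also $QH_n$ lies in every $W_{\ast,m}$ and every $W_{i,m}$ with $i>n$, since both factors do. Thus $QH_n\subseteq E_n$.
 \item[$\subseteq$:] Since $1\in H_n$, any $x\in(E_n:H_n)$ lies in $E_n\subseteq A$, so it suffices to rule out $v(x)=0$. Suppose $v(x)=0$. Then $x\in B_n$ forces $u_n(\ac(x))\geq0$. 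By \cref{eq:first-witness}, $X_n^{-1}\in H_n$, hence $X_n^{-r}\in H_n$ for every $r\geq1$. Choosing $r>u_n(\ac(x))$ gives $v(xX_n^{-r})=0$ and $u_n(\ac(xX_n^{-r}))<0$. So $xX_n^{-r}\notin B_n$, hence $xX_n^{-r}\notin E_n$, a contradiction.
\end{itemize}
Therefore $x\in M\cap E_n=Q$, and $Q^{v_{E_n}}=(E_n:H_n)=Q$, so $Q$ is divisorial in $E_n$.

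I expect the main point needing care to be the $B_n$-colon. One has to verify that $Q$ contains elements of arbitrarily small positive $v$-value, which is what \cref{eq:T-e-universal} supplies. This is what makes the dual exactly $H_n$ rather than something larger. The same witness $X_n^{-1}$ that separates $E_n$ from $H_n$ then drives the second colon back down to $Q$.
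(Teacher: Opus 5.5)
Your proposal is correct and follows the paper's proof essentially step for step: it uses the same computation $(B_n:Q)=A$ via the powers $T^e$, the same unit argument with $p_{i,m}$ giving $(E_n:Q)=H_n$, and the same conductor calculation $(E_n:H_n)=Q$ using a negative power of $X_n$. The only cosmetic difference is that you get $X_n^{-r}\in H_n$ for integer $r$ from \cref{eq:first-witness} and closure under multiplication, whereas the paper checks $X_n^{-e}\in H_n$ directly for rational $e$.
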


\begin{proof}
Write $Q=Q_{2n}$.  We first compute
\begin{equation}\label{eq:B-colon-Q}
 (B_n:Q)=A.
\end{equation}
If $x\in A$ and $q\in Q$, then $v(q)>0$, so
$v(xq)>0$ and $xq\in B_n$ by \cref{eq:B-i}.  Hence
$A\subseteq(B_n:Q)$.  If $x\notin A$, choose
$e\in\Q$ with $0<e<-v(x)$.  Then $T^e\in Q$ by
\cref{eq:T-e-universal}, but $v(xT^e)<0$, so
$xT^e\notin B_n$.  This proves the reverse inclusion in
\cref{eq:B-colon-Q}.

For every $W_{i,m}$ occurring in \cref{eq:E-n}, the same unit argument using
$p_{i,m}\in Q$ gives $(W_{i,m}:Q)=W_{i,m}$.  Therefore
\begin{align}
 (E_n:Q)
 &=A\cap C_\ast\cap
   \bigcap_{\substack{i\in\Z,\ i>n\\m\geq1}}W_{i,m}\notag\\
 &=H_n.\label{eq:E-colon-Q}
\end{align}

It remains to calculate the conductor from $H_n$ to $E_n$.  We claim that
\begin{equation}\label{eq:E-colon-H}
 (E_n:H_n)=Q.
\end{equation}
If $q\in Q$ and $h\in H_n$, then both elements lie in every auxiliary
valuation ring defining $E_n$.  Moreover, $v(q)>0$ and $v(h)\geq0$, so
$v(qh)>0$ and therefore $qh\in B_n$.  Thus $QH_n\subseteq E_n$, proving
$Q\subseteq(E_n:H_n)$.

Conversely, suppose $xH_n\subseteq E_n$.  Since $1\in H_n$, we have
$x\in E_n$.  If $x\notin Q$, then $v(x)=0$, and membership in $B_n$ gives
\[
 a:=u_n(\ac(x))\geq0.
\]
Choose $e\in\Q$ with $e>a$.  The element $X_n^{-e}$ has $v$-value zero and
is a unit of every $W_{\ast,m}$ and every $W_{i,m}$ with $i>n$, because the
corresponding coordinate valuation vanishes on $X_n$.  Hence
$X_n^{-e}\in H_n$.  But
\[
 u_n\bigl(\ac(xX_n^{-e})\bigr)=a-e<0,
\]
so $xX_n^{-e}\notin B_n$ and hence $xX_n^{-e}\notin E_n$, a contradiction.
Thus $x\in Q$, completing the proof of \cref{eq:E-colon-H}.

Combining \cref{eq:E-colon-Q,eq:E-colon-H}, we conclude that
\[
 Q^{v_{E_n}}=(E_n:(E_n:Q))=(E_n:H_n)=Q.
\]
Thus $Q_{2n}$ is divisorial.
\end{proof}

The two propositions give the exact parity rule
\begin{equation}\label{eq:parity-rule}
 Q_j\text{ is divisorial in }S_j
 \quad\Longleftrightarrow\quad j\text{ is even}.
\end{equation}

\section{The positive union}\label{sec:positive-limit}

Define
\[
 S^+=\bigcup_{j\geq0}S_j.
\]
We first identify this ring exactly.

\begin{proposition}\label{prop:positive-union}
One has
\begin{equation}\label{eq:positive-union}
 S^+=A\cap C_\ast.
\end{equation}
Consequently $S^+$ is a B\'ezout domain with quotient field $K$, and
$S^+\subsetneq A$.
\end{proposition}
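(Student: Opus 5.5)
The plan is to prove the two inclusions in \eqref{eq:positive-union} separately and then deduce the remaining assertions from earlier results.

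\emph{The inclusion $S^+\subseteq A\cap C_\ast$.} By \cref{prop:strict-chain}, every $S_j$ is contained in $H_n$ for suitable $n$. By \eqref{eq:H-n}, each $H_n$ is contained in $A\cap C_\ast$. Hence every $S_j$ lies in $A\cap C_\ast$, and so does their union.

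\emph{The inclusion $A\cap C_\ast\subseteq S^+$.} Fix $0\neq z\in A\cap C_\ast$. We must find $n\geq0$ with $z\in H_n=S_{2n+1}$. It suffices to show that $z\in W_{i,m}$ for all integers $i>n$ and all $m\geq1$.

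Choose $N$ with $z\in F(T^{1/N})$ and write $z=f(Z)/g(Z)$ as in the proof of \cref{lem:limiting}. The finitely many coefficients $a_\ell,b_\ell\in F$ involve only finitely many variables $X_j$. So there is $n\geq0$ such that no $X_i$ with $i>n$ occurs in any coefficient.

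Now fix $i>n$ and $m\geq1$. Every coefficient then has $u_i$-value $0$. In $f(X_i^{m/N})$ the terms are $a_\ell X_i^{m\ell/N}$, with pairwise distinct $u_i$-values $m\ell/N$. Grouping by $X_i$-exponent, the homogeneous parts do not cancel, since the $a_\ell$ are free of $X_i$. So the specialization is nonzero and $u_i\bigl(f(X_i^{m/N})\bigr)=mr_0/N$, and similarly for $g$.

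Thus $z$ has neither zero nor pole at $T=X_i^m$. By \eqref{eq:ordinary-specialization}, $d_{i,m}(z)=0$, and \eqref{eq:asymptotic-value} holds exactly rather than only asymptotically:
\[
u_i(\overline z)=m\,v(z)+u_i(\ac(z))=m\,v(z)\geq0,
\]
because $z\in A$ and $\ac(z)=a_{r_0}/b_{s_0}$ is free of $X_i$. Hence $z\in W_{i,m}$, and so $z\in H_n\subseteq S^+$.

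\emph{Consequences.} The B\'ezout property and the quotient field $K$ follow from \cref{lem:intersection}, since $A\cap C_\ast$ is a countable intersection of valuation rings trivial on the uncountable field $k$. Properness follows from $X_\ast^{-1}\in A\setminus W_{\ast,1}$: this element has $d_{\ast,1}$-value $0$ and residue of $u_\ast$-value $-1$.

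The main obstacle is the exactness of the $u_i$-value computation, that is, justifying that specialization at $X_i^m$ has no cancellation when the coefficients are free of $X_i$. I would handle it by the same "least homogeneous part" argument that made $u_i$ well defined in \cref{sec:valuations}.
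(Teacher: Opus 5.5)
Your proposal is correct and follows essentially the same route as the paper: the trivial inclusion, then choosing $n\geq0$ beyond the finitely many indices appearing in the coefficients of a presentation of $z$, so that for $i>n$ the specialization gives exactly $u_i\bigl(z(X_i^m)\bigr)=m\,v(z)\geq0$ and hence $z\in H_n$, and finally \cref{lem:intersection} together with the witness $X_\ast^{-1}$. The one point you single out as the main obstacle needs no homogeneous-part argument: because $u_i$ vanishes on the coefficients, the terms $a_\ell X_i^{m\ell/N}$ have pairwise distinct $u_i$-values, and the strict ultrametric inequality already rules out cancellation.
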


\begin{proof}
Every $S_j$ with $j\geq0$ is contained in both $A$ and $C_\ast$, so
$S^+\subseteq A\cap C_\ast$.

For the reverse inclusion, take $z\in A\cap C_\ast$.  The case $z=0$ is
immediate, so assume $z\neq0$.  By
\cref{eq:fields,eq:K-union}, there are a finite set $J\subseteq\Z$, an
integer $N\geq1$, and nonzero polynomials $f,g\in F_J[Z]$ such that
\begin{equation}\label{eq:finite-presentation}
 z=\frac{f(Z)}{g(Z)},\qquad Z=T^{1/N},
\end{equation}
where
\[
 F_J=\operatorname{Frac}\!\left(
 k\left[\bigoplus_{i\in J\cup\{\ast\}}\Q e_i\right]\right).
\]
Choose $n\geq0$ larger than every member of $J$.  If $i>n$, then $u_i$ is
zero on every nonzero coefficient of $f$ and $g$.  Write $r_0$ and $s_0$
for their respective least exponents.  After substituting
$Z=X_i^{m/N}$, the term of exponent $r_0$ is uniquely of least $u_i$-value
in $f$, and the term of exponent $s_0$ is uniquely of least value in $g$.
Thus neither evaluation vanishes, and for every $m\geq1$,
\begin{equation}\label{eq:far-coordinate-value}
 u_i\bigl(z(X_i^m)\bigr)
   =\frac{m(r_0-s_0)}N=mv(z)\geq0.
\end{equation}
It follows from \cref{eq:W-i-m,eq:ordinary-specialization} that
$z\in W_{i,m}$ for every $i>n$ and $m\geq1$.  Together with
$z\in A\cap C_\ast$, this says $z\in H_n=S_{2n+1}\subseteq S^+$, proving
\cref{eq:positive-union}.

The right side of \cref{eq:positive-union} is a countable intersection of
valuation rings trivial on $k$, so \cref{lem:intersection} makes it a
B\'ezout domain with quotient field $K$.  Finally,
$X_\ast^{-1}\in A$ but $X_\ast^{-1}\notin W_{\ast,1}$, since its
$u_\ast$-value is $-1$.  Hence $S^+\subsetneq A$.
\end{proof}

Let
\begin{equation}\label{eq:Q-plus}
 Q^+=M\cap S^+.
\end{equation}
This is a nonzero proper prime: $T\in Q^+$ and $1\notin Q^+$.

\begin{proposition}\label{prop:positive-prime}
The prime $Q^+$ satisfies $Q^+=PS^+$ and is nondivisorial in $S^+$.
\end{proposition}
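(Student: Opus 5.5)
For the identity $Q^+=PS^+$ I will use \cref{lem:prime-extension} directly. The ring $S_0=E_0$ is B\'ezout, hence Pr\"ufer, and $S^+$ is an overring of $S_0$ inside $K$. The ideal $Q^+=M\cap S^+$ is prime in $S^+$, and its contraction to $S_0$ is $M\cap S_0=P$ by \cref{eq:P}. The lemma then gives $Q^+=PS^+$.

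For nondivisoriality I will copy the odd-stage argument of \cref{prop:odd}, using the description $S^+=A\cap C_\ast$ from \cref{prop:positive-union}. By \cref{eq:colon-intersection},
\[
 (S^+:Q^+)=(A:Q^+)\cap\bigcap_{m\geq1}(W_{\ast,m}:Q^+).
\]

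\textbf{The $A$-colon.} I will show $(A:Q^+)=A$. Since $T^e\in Q_0\subseteq Q^+$ for every rational $e>0$ by \cref{eq:T-e-universal}, any $x$ with $v(x)<0$ satisfies $v(xT^e)<0$ once $0<e<-v(x)$. So $xQ^+\not\subseteq A$, and the inclusion $A\subseteq(A:Q^+)$ is clear.

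\textbf{The $W_{\ast,m}$-colons.} I will use the elements $p_{\ast,m}$ of \cref{eq:p-i-m}. By \cref{eq:p-properties}, $p_{\ast,m}$ has $v$-value $1$ and lies in every $W_{j,t}$. Hence it lies in $M\cap S_0\subseteq Q^+$, in fact already in $M\cap A\cap C_\ast$. Since $p_{\ast,m}$ is a unit of $W_{\ast,m}$, \cref{eq:unit-colon} gives $(W_{\ast,m}:Q^+)=W_{\ast,m}$.

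\textbf{Conclusion.} Combining the two computations gives $(S^+:Q^+)=A\cap C_\ast=S^+$. Therefore $(Q^+)^{v_{S^+}}=(S^+:S^+)=S^+$. Because $Q^+$ is proper and nonzero (it contains $T$ but not $1$), it follows that $Q^+\neq(Q^+)^{v_{S^+}}$, so $Q^+$ is nondivisorial.

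I do not expect a real obstacle. The only point needing care is that the unit-witness $p_{\ast,m}$ must lie in $Q^+$ itself and not merely in $M$. This holds because $p_{\ast,m}$ already belongs to every auxiliary ring and to $M$, so it lies in every $Q_j$.
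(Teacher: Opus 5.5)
Your proposal is correct and matches the paper's proof essentially step for step. You get $Q^+=PS^+$ by contracting to $S_0$ and applying \cref{lem:prime-extension}. You then show nondivisoriality by computing the colon through $S^+=A\cap C_\ast$, using the powers $T^e$ for the $A$-factor and the unit witnesses $p_{\ast,m}\in Q^+$ for each $W_{\ast,m}$-factor.
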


\begin{proof}
Its contraction to $S_0$ is
\[
 Q^+\cap S_0=(M\cap S^+)\cap S_0=M\cap S_0=P.
\]
Therefore \cref{lem:prime-extension}, applied to $S_0\subseteq S^+$, gives
$Q^+=PS^+$.

Certainly $AQ^+\subseteq A$.  Conversely, if $x\notin A$, choose
$e\in\Q_{>0}$ with $e<-v(x)$.  Then $T^e\in Q^+$, whereas
$v(xT^e)<0$, so $xT^e\notin A$.  Hence $(A:Q^+)=A$.  For every
$m\geq1$, the element $p_{\ast,m}$ belongs to
$Q^+$ by \cref{eq:p-properties} and is a unit of $W_{\ast,m}$.  Hence
$(W_{\ast,m}:Q^+)=W_{\ast,m}$.  Taking the colon through
\cref{eq:positive-union}, we get
\[
 (S^+:Q^+)=A\cap\bigcap_{m\geq1}W_{\ast,m}=S^+.
\]
It follows that
\[
 (Q^+)^{v_{S^+}}=(S^+:(S^+:Q^+))=(S^+:S^+)=S^+\neq Q^+.
\]
Thus $Q^+$ is nondivisorial.
\end{proof}

\section{The negative intersection}\label{sec:negative-limit}

Define
\[
 S^-=\bigcap_{j\leq0}S_j=\bigcap_{m\geq0}S_{-m}.
\]

\begin{proposition}\label{prop:negative-intersection}
One has
\begin{equation}\label{eq:negative-intersection}
 S^-=A\cap C_\ast\cap
       \bigcap_{\substack{i\in\Z\\m\geq1}}W_{i,m}.
\end{equation}
Consequently $S^-$ is a B\'ezout domain with quotient field $K$.
\end{proposition}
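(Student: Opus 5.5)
The plan is to prove the two inclusions in \cref{eq:negative-intersection} separately and then invoke \cref{lem:intersection}.

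\emph{The right side is contained in $S^-$.} Denote the right side by $D$. It suffices to show $D\subseteq S_{-2n}=E_{-n}$ for every $n\geq0$. Indeed, the odd-indexed stages satisfy $S_{-2n-1}=H_{-n-1}\supseteq E_{-n-1}=S_{-2n-2}$ by \cref{prop:strict-chain}, so the intersection over $j\leq0$ equals the intersection over the even indices. Now fix $n$ and $z\in D$. Then $z$ lies in $A$, in $C_\ast$, and in every $W_{i,m}$; in particular it lies in every $W_{i,m}$ with $i>-n$. It remains to check $z\in B_{-n}$. Since $z\in A\cap\bigcap_{m\geq1}W_{-n,m}$, \cref{lem:limiting} with $i=-n$ gives exactly this. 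Hence $z\in E_{-n}$.

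\emph{$S^-$ is contained in the right side.} Let $z\in S^-$. Then $z\in S_0\subseteq A$ and $z\in C_\ast$, since every $S_j$ lies in $A\cap C_\ast$ by \cref{eq:H-n,eq:E-n} and $B_n\subseteq A$. Now fix $i\in\Z$ and $m\geq1$, and choose $n<i$, say $n=i-1$. Then $z\in H_n=S_{2n+1}$, which is a valid index once $2n+1\leq 0$; if it is not, replace $n$ by a more negative integer, which only enlarges the set of $W_{i',m}$ with $i'>n$ and still includes $i$. Since $i>n$, $W_{i,m}$ occurs in \cref{eq:H-n}, so $z\in W_{i,m}$. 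Letting $i$ and $m$ vary gives the reverse inclusion.

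\emph{Conclusion.} The right side of \cref{eq:negative-intersection} is an intersection of countably many valuation rings of $K$ ($A$, the $W_{\ast,m}$, and the $W_{i,m}$ for $i\in\Z$, $m\geq1$), all trivial on the uncountable field $k$. \cref{lem:intersection} therefore makes $S^-$ a B\'ezout domain with quotient field $K$.

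The only point requiring any care is recovering the $B_{-n}$ conditions in the first inclusion. This is handled by \cref{lem:limiting}, and it is precisely why $A$ together with all of the families $W_{i,m}$, $i\in\Z$, appear in the displayed formula. The rest is bookkeeping of indices.
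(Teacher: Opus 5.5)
Your proof is correct and follows essentially the same route as the paper: both inclusions are index bookkeeping, with \cref{lem:limiting} recovering each $B_n$-condition from $A$ and the full family $(W_{n,m})_{m\geq1}$, and \cref{lem:intersection} giving the B\'ezout conclusion. The differences are cosmetic. You reduce to the even stages via $E_n\subseteq H_n$, whereas the paper checks membership in every $H_n$ and $E_n$ directly. You also use $H_n$ rather than $E_n$ to extract the $W_{i,m}$-conditions.
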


\begin{proof}
Let $x\in S^-$.  Every $S_j$ lies in $A\cap C_\ast$, so
$x\in A\cap C_\ast$.  Fix $i\in\Z$ and $m\geq1$, and choose an integer
$n<\min\{i,0\}$.  Then $E_n=S_{2n}$ is one of the rings indexed by a
nonpositive integer in the intersection defining $S^-$.  Because $i>n$,
the valuation ring $W_{i,m}$ occurs in the defining intersection
\cref{eq:E-n} of $E_n$.  Thus $x\in W_{i,m}$.  Since $i$ and $m$ were
arbitrary, $x$ belongs to the right side of
\cref{eq:negative-intersection}.

Conversely, suppose $x$ belongs to the right side.  It belongs directly to
every $H_n$, because it satisfies $A$, $C_\ast$, and every integer-indexed
$W$-condition.  For each $n$, it also lies in
$A\cap\bigcap_{m\geq1}W_{n,m}$, so \cref{lem:limiting} gives $x\in B_n$.
It follows that $x\in E_n$ as well.  Hence $x$ lies in every $S_j$, and in
particular in every $S_j$ with $j\leq0$.  This proves the reverse inclusion.

The right side of \cref{eq:negative-intersection} is a countable
intersection of valuation rings trivial on $k$.  It is therefore a B\'ezout
domain with quotient field $K$ by \cref{lem:intersection}.
\end{proof}

Set
\begin{equation}\label{eq:Q-minus}
 Q^-=M\cap S^-.
\end{equation}
Since $S^-\subseteq S_0$, we have
\[
 Q^-=M\cap S^-=P\cap S^-.
\]
Again $T\in Q^-$ and $1\notin Q^-$, so this is a nonzero proper prime.

\Needspace{4\baselineskip}
\begin{proposition}\label{prop:negative-prime}
The prime $Q^-$ is nondivisorial in $S^-$.
\end{proposition}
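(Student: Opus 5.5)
The plan is to copy the odd-stage argument of \cref{prop:odd}, now using the explicit description \cref{eq:negative-intersection} of $S^-$. That description writes $S^-$ as the intersection of $A$, all $W_{\ast,m}$, and all $W_{i,m}$ with $i\in\Z$, $m\geq1$. No ring $B_n$ appears, and this is exactly what should make $Q^-$ behave like an odd-stage prime. I will show that $(S^-:Q^-)=S^-$. Then
\[
(Q^-)^{v_{S^-}}=(S^-:S^-)=S^-\neq Q^-,
\]
since $T\in Q^-$ and $1\notin Q^-$.

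To compute the colon, I will use \cref{eq:colon-intersection} and handle each valuation ring in \cref{eq:negative-intersection} separately.

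\emph{The ring $A$.} I will show $(A:Q^-)=A$. The inclusion $A\subseteq(A:Q^-)$ holds because $Q^-\subseteq M\subseteq A$. For the reverse, let $x\notin A$ and pick $e\in\Q$ with $0<e<-v(x)$. I need $T^e\in Q^-$. By \cref{eq:T-e-universal}, $T^e\in M\cap S_j$ for every $j$, so $T^e\in M\cap S^-=Q^-$. But $v(xT^e)<0$, so $xT^e\notin A$, and hence $x\notin(A:Q^-)$.

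\emph{The rings $W_{i,m}$, $i\in\Lambda$.} I will show $(W_{i,m}:Q^-)=W_{i,m}$ using \cref{eq:unit-colon} with the witness $p_{i,m}$. For this I need $p_{i,m}\in Q^-$. We have $v(p_{i,m})=1>0$, so $p_{i,m}\in M\subseteq A$. By \cref{eq:p-properties}, $p_{i,m}$ lies in every $W_{j,t}$ with $j\in\Lambda$, $t\geq1$. So $p_{i,m}$ lies in every ring in \cref{eq:negative-intersection}, giving $p_{i,m}\in S^-\cap M=Q^-$. Also by \cref{eq:p-properties}, $p_{i,m}\in W_{i,m}^\times$, so \cref{eq:unit-colon} applies.

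Intersecting these colons gives back the right side of \cref{eq:negative-intersection}, that is, $(S^-:Q^-)=S^-$. This completes the argument.

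The only point needing care is membership of the witnesses $T^e$ and $p_{i,m}$ in $Q^-$. This is where the explicit description \cref{eq:negative-intersection} matters: it lets us check membership in $S^-$ directly, without going through every $B_n$.
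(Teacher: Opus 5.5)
Your proposal is correct and follows essentially the same route as the paper's proof. Both arguments compute $(A:Q^-)=A$ using the powers $T^e\in Q^-$, and $(W_{i,m}:Q^-)=W_{i,m}$ using the unit $p_{i,m}\in Q^-$, and then take the colon through the explicit description of $S^-$ to obtain $(S^-:Q^-)=S^-$.
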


\begin{proof}
The powers $T^e$, $e\in\Q_{>0}$, lie in $Q^-$ by
\cref{eq:T-e-universal,eq:negative-intersection}.  Since
$AQ^-\subseteq A$, one has $A\subseteq(A:Q^-)$.  If $x\notin A$, choose
$e\in\Q_{>0}$ with $e<-v(x)$.  Then $T^e\in Q^-$ but
$xT^e\notin A$, proving $(A:Q^-)=A$.  For every
$i\in\Lambda$ and $m\geq1$, the element $p_{i,m}$ lies in $Q^-$ and is a
unit of $W_{i,m}$ by \cref{eq:p-properties}.  Hence
$(W_{i,m}:Q^-)=W_{i,m}$ for every valuation ring in
\cref{eq:negative-intersection}.  Taking colons through that intersection
gives
\[
 (S^-:Q^-)=S^-.
\]
Consequently
\[
 (Q^-)^{v_{S^-}}=(S^-:(S^-:Q^-))=(S^-:S^-)=S^-\neq Q^-.
\]
Thus $Q^-$ is nondivisorial.
\end{proof}

\begin{proof}[Proof of \cref{thm:main}]
The B\'ezout and quotient-field assertions for the $S_j$ follow from
\cref{lem:intersection} and \cref{eq:H-n,eq:E-n}.  Strictness and proper
containment in $A$ are \cref{prop:strict-chain}; prime compatibility is
\cref{prop:compatibility}; and the localization assertion is
\cref{prop:localization}.  The parity assertion is
\cref{eq:parity-rule}.  Finally, \cref{prop:positive-union,prop:positive-prime}
give every assertion about $S^+$, while
\cref{prop:negative-intersection,prop:negative-prime} give every assertion
about $S^-$.  This proves all parts of the theorem.
\end{proof}

\begin{remark}
The extra coordinate $\ast$ is not cosmetic.  Keeping all
$W_{\ast,m}$ at every stage makes the positive union a proper subring of
$A$ and supplies a unit of each persistent valuation ring for the final
colon calculation.  By contrast, as the chain is followed toward negative
infinity, every integer-indexed family $W_{i,m}$ eventually appears; this is
exactly what produces the explicit intersection in
\cref{eq:negative-intersection}.
\end{remark}

\section{An ascending chain with a divisorial union prime}
\label{sec:divisorial-positive-limit}

We now modify only the nonnegative half of the preceding construction.  The
initial ring and prime will remain $S_0=E_0$ and $P=Q_0$, but at stage $n$
we retain only a tail of the star-indexed valuations.  For $n\geq0$, put
\begin{align}
 C_{\ast,n}&=\bigcap_{m>n}W_{\ast,m},\label{eq:star-tail}\\
 \widetilde H_n
  &=A\cap C_{\ast,n}\cap
       \bigcap_{\substack{i\in\Z,\ i>n\\m\geq1}}W_{i,m},
       \label{eq:H-tilde}\\
 \widetilde E_n
  &=B_n\cap C_{\ast,n}\cap
       \bigcap_{\substack{i\in\Z,\ i>n\\m\geq1}}W_{i,m},
       \label{eq:E-tilde}\\
 \widetilde S_{2n}&=\widetilde E_n,
 \qquad \widetilde S_{2n+1}=\widetilde H_n.
       \label{eq:S-tilde}
\end{align}
Every ring in \cref{eq:S-tilde} is a countable intersection of valuation
rings trivial on $k$, so it is a B\'ezout domain with quotient field $K$ by
\cref{lem:intersection}.  Notice that
\begin{equation}\label{eq:same-initial-ring}
 \widetilde S_0=\widetilde E_0=E_0=S_0,
\end{equation}
because $C_{\ast,0}=C_\ast$.

\begin{proposition}\label{prop:tilde-chain}
For every $n\geq0$,
\[
 \widetilde E_n\subsetneq\widetilde H_n
      \subsetneq\widetilde E_{n+1}.
\]
If $\widetilde Q_j=M\cap\widetilde S_j$, then
\(\widetilde Q_j\) is divisorial in \(\widetilde S_j\) exactly when $j$ is
even.  Moreover,
\[
 P\widetilde S_j=\widetilde Q_j\qquad(j\geq0).
\]
\end{proposition}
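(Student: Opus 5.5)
The plan is to rerun the arguments of \cref{prop:strict-chain}, \cref{prop:odd}, \cref{prop:even} and \cref{prop:compatibility} with $C_\ast$ replaced by the tail $C_{\ast,n}$. None of these arguments used any particular $W_{\ast,m}$. They used only the following facts: each $W_{\ast,m}$ in the defining intersection has a unit $p_{\ast,m}$ lying in the tracked prime, the witnesses $X_n^{-1}$ and $X_n^{-e}$ are units there, and $TX_{n+1}^{-2}$ has positive residue $u_\ast$-value there. All three facts hold for each individual $W_{\ast,m}$, so they persist for the tails.

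\emph{Chain.} The inclusion $\widetilde E_n\subseteq\widetilde H_n$ is clear from $B_n\subseteq A$. The element $X_n^{-1}$ lies in $\widetilde H_n\setminus B_n$, exactly as in \cref{eq:first-witness}. For $\widetilde H_n\subseteq\widetilde E_{n+1}$, \cref{lem:limiting} gives $\widetilde H_n\subseteq B_{n+1}$. Since $C_{\ast,n}\subseteq C_{\ast,n+1}$, dropping the family $(W_{n+1,m})_m$ and the single ring $W_{\ast,n+1}$ leaves a subset of the conditions defining $\widetilde E_{n+1}$. For strictness, the witness $TX_{n+1}^{-2}$ from \cref{eq:second-witness} still lies in $\widetilde E_{n+1}$, since its $W_{\ast,m}$-residue has $u_\ast$-value $m>0$. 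It fails to lie in $W_{n+1,1}$, which is one of the rings defining $\widetilde H_n$. (One could alternatively use $X_\ast^{-1}$-type elements against $W_{\ast,n+1}$, but the old witness suffices.)

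\emph{Parity.} For the odd stages, $(A:\widetilde Q)=A$ by the $T^e$ argument, because \cref{eq:T-e-universal} holds in every $\widetilde S_j$ by the same computation. Each $W_{i,m}$ occurring in \cref{eq:H-tilde}, including $W_{\ast,m}$ with $m>n$, contains $p_{i,m}\in\widetilde Q$ as a unit by \cref{eq:p-properties}. So \cref{eq:colon-intersection} gives $(\widetilde H_n:\widetilde Q)=\widetilde H_n$, and $\widetilde Q$ is nondivisorial. For the even stages, the same steps as in \cref{prop:even} give $(B_n:\widetilde Q)=A$, hence $(\widetilde E_n:\widetilde Q)=\widetilde H_n$. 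We then need the conductor $(\widetilde E_n:\widetilde H_n)=\widetilde Q$. The inclusion $\supseteq$ follows from $v(qh)>0$. For $\subseteq$, use the witness $X_n^{-e}$: it is a unit of every $W_{\ast,m}$ and of every $W_{i,m}$ with $i>n$, so it lies in $\widetilde H_n$ and the contradiction goes through verbatim. Hence $\widetilde Q_{2n}$ is divisorial.

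\emph{Extension.} Since $\widetilde S_0=S_0$ by \cref{eq:same-initial-ring} and $\widetilde S_0\subseteq\widetilde S_j$, we have $\widetilde Q_j\cap S_0=M\cap S_0=P$. \cref{lem:prime-extension}, applied to the Pr\"ufer domain $S_0$ and its overring $\widetilde S_j\subseteq K$, then gives $P\widetilde S_j=\widetilde Q_j$.

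I expect no serious obstacle. The only point needing care is the inclusion $\widetilde H_n\subseteq\widetilde E_{n+1}$, where one must note that the star tail shrinks as $n$ grows ($C_{\ast,n}\subseteq C_{\ast,n+1}$), so the rings are still nested. I would also check that no step of \cref{prop:even} secretly used $W_{\ast,m}$ for small $m$; it does not.
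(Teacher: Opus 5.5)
Your proposal is correct and follows the paper's proof essentially step for step. It uses the same witnesses $X_n^{-1}$ and $TX_{n+1}^{-2}$, notes that passing to $\widetilde E_{n+1}$ also drops $W_{\ast,n+1}$, runs the same colon computations via $T^e$, $p_{i,m}$ and $X_n^{-e}$, and applies \cref{lem:prime-extension} over $\widetilde S_0=S_0$. One wording point: after deleting $(W_{n+1,m})_m$ and $W_{\ast,n+1}$, what remains is \emph{exactly} the list of conditions defining $\widetilde E_{n+1}$ other than $B_{n+1}$, not merely a subset of it, and the inclusion needs that direction.
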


\begin{proof}
Since $B_n\subseteq A$, the first inclusion is immediate.  The element
$X_n^{-1}$ has value zero in every valuation occurring in
\cref{eq:H-tilde}, but its $\beta_n$-value is $(0,-1)$.  Hence
\[
 X_n^{-1}\in\widetilde H_n\setminus\widetilde E_n.
\]
The ring $\widetilde H_n$ is contained in $A$ and in the complete family
$(W_{n+1,m})_{m\geq1}$, so \cref{lem:limiting} gives
$\widetilde H_n\subseteq B_{n+1}$.  Passing from
\cref{eq:H-tilde} to \cref{eq:E-tilde} with index $n+1$ replaces the
$A$-condition by the now verified $B_{n+1}$-condition, removes this complete
family, and also removes $W_{\ast,n+1}$; all other conditions are unchanged.
Thus
$\widetilde H_n\subseteq\widetilde E_{n+1}$.  Finally,
\[
 TX_{n+1}^{-2}
   \in\widetilde E_{n+1}\setminus\widetilde H_n.
\]
Indeed, its $v$-value is $1$, its second value at every retained
$W_{\ast,m}$ and every $W_{i,m}$ with $i>n+1$ is $m>0$, whereas its second
value at $W_{n+1,1}$ is $-1$.  This proves both strict inclusions.

We give the divisoriality calculation to verify that deleting finitely many
star conditions changes no finite-stage conclusion.  If $e\in\Q_{>0}$,
then $v(T^e)=e>0$, and at every $W_{i,m}$ one has
$d_{i,m}(T^e)=0$ and
$u_i(\overline{T^e})=u_i(X_i^{me})=me>0$.  Thus
$T^e\in\widetilde Q_j$ for every $j$.  Moreover, for each $W_{i,m}$ in a
defining intersection, \cref{eq:p-properties} gives
$p_{i,m}\in\widetilde Q_j\cap W_{i,m}^{\times}$.

Put $\widetilde Q=\widetilde Q_{2n+1}=M\cap\widetilde H_n$.  We have
\[
 (A:\widetilde Q)=A,
 \qquad
 (W_{i,m}:\widetilde Q)=W_{i,m}.
\]
Indeed, both right-to-left inclusions follow from
$\widetilde Q\subseteq\widetilde H_n\subseteq A\cap W_{i,m}$.  If
$x\notin A$, choose $e\in\Q_{>0}$ with $e<-v(x)$; then
$T^e\in\widetilde Q$ but $xT^e\notin A$.  If $x\notin W_{i,m}$, then
$xp_{i,m}\notin W_{i,m}$ because $p_{i,m}$ is a unit.  These observations
prove the reverse inclusions.  Taking the colon through
\cref{eq:H-tilde} now yields
\begin{equation}\label{eq:tilde-odd-colon}
 (\widetilde H_n:\widetilde Q)=\widetilde H_n,
\end{equation}
so the odd prime is nondivisorial.

For the even stage, put $\widetilde Q=M\cap\widetilde E_n$.  If $a\in A$
and $q\in\widetilde Q$, then $v(aq)>0$, so $aq\in B_n$; hence
$A\subseteq(B_n:\widetilde Q)$.  Conversely, if $x\notin A$, choose
$e\in\Q_{>0}$ with $e<-v(x)$.  Since $T^e\in\widetilde Q$ and
$xT^e\notin A\supseteq B_n$, one has $x\notin(B_n:\widetilde Q)$.
Thus $(B_n:\widetilde Q)=A$.  The unit argument above also gives
$(W_{i,m}:\widetilde Q)=W_{i,m}$ for every retained auxiliary valuation.
Consequently
\begin{equation}\label{eq:tilde-even-first-colon}
 (\widetilde E_n:\widetilde Q)=\widetilde H_n.
\end{equation}
We next compute the second colon.  If
$q\in M\cap\widetilde E_n$ and $h\in\widetilde H_n$, then $q$ and $h$
belong to every common auxiliary valuation ring, while
$v(qh)=v(q)+v(h)>0$.  Thus $qh\in B_n$ and
$q\widetilde H_n\subseteq\widetilde E_n$, proving
$M\cap\widetilde E_n\subseteq(\widetilde E_n:\widetilde H_n)$.

Conversely, suppose $x\widetilde H_n\subseteq\widetilde E_n$.  Because
$1\in\widetilde H_n$, one has $x\in\widetilde E_n$.  If
$x\notin M\cap\widetilde E_n$, then $v(x)=0$.  Choose
$e\in\Q_{>0}$ with $e>u_n(\ac(x))$.  The element $X_n^{-e}$ lies in
$\widetilde H_n$: its $v$-value is zero, and every coordinate valuation
retained in \cref{eq:H-tilde} is different from $u_n$.  But
$u_n(\ac(xX_n^{-e}))=u_n(\ac(x))-e<0$, so
$xX_n^{-e}\notin B_n$ and hence not in $\widetilde E_n$, a contradiction.
We have proved
\begin{equation}\label{eq:tilde-even-second-colon}
 (\widetilde E_n:\widetilde H_n)=M\cap\widetilde E_n.
\end{equation}
Combining \cref{eq:tilde-even-first-colon,eq:tilde-even-second-colon}
makes the even prime divisorial.

It remains only to identify the extensions of $P$.  The strict inclusions
just proved make every $\widetilde S_j$ an overring of
$\widetilde S_0=S_0$.  The contraction of
$\widetilde Q_j=M\cap\widetilde S_j$ to $S_0$ is $M\cap S_0=P$.
Applying \cref{lem:prime-extension} gives
$P\widetilde S_j=\widetilde Q_j$.
\end{proof}

The following tail form of the limiting implication identifies the union.

\begin{lemma}\label{lem:cofinal-tail}
For every $i\in\Lambda$,
\begin{equation}\label{eq:cofinal-tail}
 B_i=\bigcup_{N\geq0}
       \left(A\cap\bigcap_{m>N}W_{i,m}\right).
\end{equation}
\end{lemma}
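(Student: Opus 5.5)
The inclusion $\supseteq$ in \cref{eq:cofinal-tail} is a tail version of \cref{lem:limiting}, and the inclusion $\subseteq$ comes from the asymptotic formula \cref{eq:asymptotic-value}. So the proof will simply rerun the proof of \cref{lem:limiting}, keeping track of where the threshold on $m$ comes from.

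\emph{Step 1: the inclusion $\supseteq$.} Fix $N\geq0$ and a nonzero $z\in A\cap\bigcap_{m>N}W_{i,m}$. If $v(z)>0$, then $z\in B_i$ by \cref{eq:B-i}. If $v(z)=0$, the proof of \cref{lem:limiting} gives an $m_0$ such that, for all $m\geq m_0$, $z$ has neither zero nor pole at $T=X_i^m$, and
\[
 u_i\bigl(z(X_i^m)\bigr)=mv(z)+u_i(\ac(z))=u_i(\ac(z)).
\]
Pick $m>\max\{N,m_0\}$. Membership in $W_{i,m}$, together with \cref{eq:ordinary-specialization}, gives $u_i(\ac(z))\geq0$, so $z\in B_i$. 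The only point to check is that the earlier argument used only membership in $W_{i,m}$ for arbitrarily large $m$, and not for all $m\geq1$.

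\emph{Step 2: the inclusion $\subseteq$.} Take a nonzero $z\in B_i$. Then $z\in A$, and I need an $N$ with $z\in W_{i,m}$ for all $m>N$. Use the presentation $z=f(Z)/g(Z)$ with $Z=T^{1/N'}$ from the proof of \cref{lem:limiting}. For large $m$ the evaluation is defined and nonvanishing, so $d_{i,m}(z)=0$, and by \cref{eq:asymptotic-value} the residue has $u_i$-value $mv(z)+u_i(\ac(z))$. There are two cases.
\begin{itemize}
 \item If $v(z)>0$, this value tends to $+\infty$, so it is eventually positive.
 \item If $v(z)=0$, then $z\in B_i$ forces $u_i(\ac(z))\geq0$, so the value is nonnegative.
\end{itemize}
In both cases $z\in W_{i,m}$ for all $m$ beyond the threshold at which the unique least-$u_i$-value terms of $f$ and $g$ dominate. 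Take $N$ to be that threshold. The case $z=0$ is trivial.

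\emph{Main obstacle.} The delicate point is the uniformity in Step 2. The threshold $m_0$ from \cref{lem:limiting} must be chosen so that the dominating $r_0$- and $s_0$-terms are unique minima. This rules out any cancellation that could make the evaluation vanish, or make $d_{i,m}(z)\neq0$ through a zero or pole at $T=X_i^m$. I will state this explicitly: for $m\geq m_0$, the evaluation $z(X_i^m)\in F^\times$ exists. Then \cref{eq:ordinary-specialization} applies, and no separate analysis of $d_{i,m}$ is needed. Once this is recorded, both inclusions are immediate, and the union in \cref{eq:cofinal-tail} is increasing in $N$, as the notation suggests.
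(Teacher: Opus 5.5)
Your proposal is correct and follows essentially the same route as the paper: the inclusion $\supseteq$ picks some $m>N$ beyond the eventual-regularity threshold and applies \cref{eq:asymptotic-value}, and the inclusion $\subseteq$ splits into the cases $v(z)>0$ and $v(z)=0$ exactly as the paper does. One small correction in Step~2: when $v(z)>0$, you must take $N$ past both the domination threshold and the point where $mv(z)+u_i(\ac(z))$ becomes nonnegative. For example, $z=TX_i^{-100}$ has no competing terms, yet it lies outside $W_{i,m}$ for $m<100$; your own phrase ``eventually positive'' already supplies this larger threshold.
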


\begin{proof}
Suppose first that $z$ belongs to one of the rings on the right.  The case
$z=0$ is immediate, so assume $z\neq0$.  If $v(z)>0$, then $z\in B_i$ by
definition.  If $v(z)=0$, eventual regularity and
\cref{eq:asymptotic-value} hold for all sufficiently large $m$.  Choose
such an $m>N$.  Since $z\in W_{i,m}$, its second value is nonnegative, and
\[
 0\leq u_i(z(X_i^m))=u_i(\ac(z)).
\]
Thus $z\in B_i$ in this case also.

Conversely, let $0\neq z\in B_i$.  Equation~\eqref{eq:asymptotic-value}
and the nonvanishing assertion preceding it show
that, for all sufficiently large $m$,
\[
 u_i\bigl(z(X_i^m)\bigr)=m v(z)+u_i(\ac(z)).
\]
If $v(z)>0$, the right side is eventually positive.  If $v(z)=0$, it is
$u_i(\ac(z))\geq0$ by the definition of $B_i$.  Thus $z\in W_{i,m}$ for
all sufficiently large $m$, and $z\in A$ because $B_i\subseteq A$.  The
case $z=0$ is immediate.  This proves the reverse inclusion.
\end{proof}

\begin{proposition}\label{prop:divisorial-positive-limit}
One has
\begin{equation}\label{eq:tilde-positive-union}
 \widetilde S^+:=\bigcup_{j\geq0}\widetilde S_j=B_\ast.
\end{equation}
Moreover, $B_\ast\subsetneq A$.
The induced prime is
\[
 P\widetilde S^+=M,
\]
and it is divisorial in the valuation domain $B_\ast$.
\end{proposition}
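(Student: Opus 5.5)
The plan is to prove the two inclusions in \cref{eq:tilde-positive-union} separately, using \cref{lem:cofinal-tail} for $i=\ast$ to relate the star tails to $B_\ast$. After that, the statements about $B_\ast$, the induced prime and divisoriality are short valuation-theoretic checks.

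\emph{Inclusion $\widetilde S^+\subseteq B_\ast$.} Fix $n\geq0$. Every $\widetilde S_j$ with $j\leq 2n+1$ lies in $\widetilde H_n$, and $\widetilde H_n\subseteq A\cap C_{\ast,n}=A\cap\bigcap_{m>n}W_{\ast,m}$. By \cref{lem:cofinal-tail} with $N=n$, this ring is contained in $B_\ast$. Since the $\widetilde S_j$ form a chain by \cref{prop:tilde-chain}, the union lies in $B_\ast$.

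\emph{Inclusion $B_\ast\subseteq\widetilde S^+$.} Let $0\neq z\in B_\ast$. By \cref{lem:cofinal-tail} there is an $N$ with $z\in A\cap\bigcap_{m>N}W_{\ast,m}$. Next I use the finite-presentation argument from the proof of \cref{prop:positive-union}: $z$ involves only finitely many integer coordinates $J$. Choosing $n\geq N$ larger than every member of $J$, \cref{eq:far-coordinate-value} gives $u_i(z(X_i^m))=mv(z)\geq0$ for all $i>n$ and all $m\geq1$, so $z\in W_{i,m}$. Since $C_{\ast,n}\supseteq C_{\ast,N}$, it follows that $z\in\widetilde H_n=\widetilde S_{2n+1}$.

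\emph{Properness and the induced prime.} The element $X_\ast^{-1}$ lies in $A$ but has $\beta_\ast$-value $(0,-1)$, so $B_\ast\subsetneq A$. For the prime, the maximal ideal of $B_\ast$ is $\{z:\beta_\ast(z)>0\}$, which strictly contains $M$ (it contains $X_\ast$). So $M$ is not maximal in $B_\ast$, and I must identify $P\widetilde S^+$ separately. Since $M\subseteq B_\ast$ and $M$ is a prime ideal of $A$, it is a prime ideal of $B_\ast$. Its contraction to $S_0$ is $M\cap S_0=P$, so \cref{lem:prime-extension} applied to $S_0\subseteq B_\ast$ gives $PB_\ast=M$.

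\emph{Divisoriality.} I expect this to be the main point needing care, though it is routine. First, $(B_\ast:M)=A$. Indeed, $A\cdot M=M\subseteq B_\ast$. Conversely, if $v(x)<0$, pick $0<e<-v(x)$; then $T^e\in M$ and $xT^e\notin A\supseteq B_\ast$. Second, $(B_\ast:A)=M$. If $x\in M$ then $xA\subseteq M\subseteq B_\ast$. Conversely, suppose $xA\subseteq B_\ast$. Then $x\in B_\ast\subseteq A$. If $v(x)=0$, multiplying by $X_\ast^{-e}\in A$ with $e>u_\ast(\ac(x))$ yields an element outside $B_\ast$, a contradiction, so $x\in M$. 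Hence $M^{v}=(B_\ast:A)=M$, and $M$ is divisorial in $B_\ast$. This is the standard fact that a non-maximal prime $M$ of a valuation domain with $B_M=A$ is divisorial.
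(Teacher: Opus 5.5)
Your proof is correct and follows the paper's argument essentially step by step. Both inclusions go through \cref{lem:cofinal-tail} together with the far-coordinate computation \cref{eq:far-coordinate-value}; properness uses $X_\ast^{-1}$; the equality $PB_\ast=M$ comes from \cref{lem:prime-extension}; and divisoriality follows from the two colons $(B_\ast:M)=A$ and $(B_\ast:A)=M$.
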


\begin{proof}
Every $\widetilde S_j$ is contained in $A$ and in a cofinal tail of the
family $(W_{\ast,m})_{m\geq1}$.  By \cref{lem:cofinal-tail}, it is contained
in $B_\ast$.  Hence $\widetilde S^+\subseteq B_\ast$.

For the converse, take $0\neq z\in B_\ast$.  By
\cref{lem:cofinal-tail}, there is $N$ such that $z\in W_{\ast,m}$ whenever
$m>N$.  Choose a finite set $J\subseteq\Z$ supporting all the coefficients
in a presentation of $z$ as in \cref{eq:finite-presentation}, and then
choose $n\geq N$ larger than every member of $J$.  For $i>n$, the calculation
in \cref{eq:far-coordinate-value} gives
\[
 u_i\bigl(z(X_i^m)\bigr)=m v(z)\geq0
 \qquad(m\geq1).
\]
Thus $z$ satisfies every condition defining $\widetilde H_n$, and hence
$z\in\widetilde S^+$.  Zero belongs to both sides, proving
\cref{eq:tilde-positive-union}.

The containment $B_\ast\subseteq A$ follows from its definition, and it is
strict because $X_\ast^{-1}\in A\setminus B_\ast$.

By definition, $B_\ast$ contains every element of positive $v$-value, so
$M\cap B_\ast=M$.  Its contraction to $S_0$ is $P$; alternatively using
\cref{lem:prime-extension}, we obtain $PB_\ast=M$.

We finally compute both colons.  Since multiplying a nonnegative
$v$-value by a positive one remains positive, $A\subseteq(B_\ast:M)$.  If
$x\notin A$, choose $e\in\Q$ with $0<e<-v(x)$.  Then $T^e\in M$, while
$xT^e\notin B_\ast$.  Therefore
\begin{equation}\label{eq:Bstar-M}
 (B_\ast:M)=A.
\end{equation}
Also $MA\subseteq M$, so $M\subseteq(B_\ast:A)$.  Conversely, if
$xA\subseteq B_\ast$, then $x\in B_\ast$ because $1\in A$.  If $x\notin
M$, then $v(x)=0$.  Choose $e>u_\ast(\ac(x))$.  Although
$X_\ast^{-e}\in A$, one has
\[
 u_\ast\bigl(\ac(xX_\ast^{-e})\bigr)
   =u_\ast(\ac(x))-e<0,
\]
contradicting $xA\subseteq B_\ast$.  Hence
\begin{equation}\label{eq:Bstar-A}
 (B_\ast:A)=M.
\end{equation}
Combining \cref{eq:Bstar-M,eq:Bstar-A} gives
\[
 M^{v_{B_\ast}}=(B_\ast:(B_\ast:M))=(B_\ast:A)=M.
\]
Thus the union prime is divisorial.
\end{proof}

The original chain and the modified chain both begin with $R=S_0$, have
$P=Q_0$, and localize to $R_P=A$ by \cref{prop:localization}.  Their union
primes are nondivisorial and divisorial, respectively, by
\cref{prop:positive-prime,prop:divisorial-positive-limit}.  This proves the
ascending assertion of \cref{thm:limit-alternatives}.

\section{Large-field avoidance and discrete auxiliary data}
\label{sec:large-avoidance}

The descending construction applies \cref{lem:intersection} to controlled
families of cardinality smaller than the ground field.

Choose a characteristic-zero field $k_0$ of cardinality
\begin{equation}\label{eq:singular-cardinal}
 \kappa=\aleph_\omega=\sup_{r<\omega}\aleph_r;
\end{equation}
for example, one may take the rational function field over $\Q$ in
$\kappa$ algebraically independent variables.  Let
\[
 \Gamma_0=\bigoplus_{i\in\Z}\Q e_i,
 \qquad
 F_0=\operatorname{Frac}(k_0[\Gamma_0]),
 \qquad K_0=F_0(t).
\]
As before, $X_i^q$ denotes the group-algebra monomial corresponding to
$qe_i$, and
\[
 u_i:F_0^\times\longrightarrow\Q
\]
is its $i$th coordinate valuation.  Let $a=\operatorname{ord}_t$ on $K_0$,
and write
\begin{equation}\label{eq:A0-M0}
 A_0=\{x:a(x)\geq0\},
 \qquad M_0=\{x:a(x)>0\}=tA_0.
\end{equation}
For $x\neq0$, let $\ac_0(x)$ be its $t$-adic angular component.  Define the
composite valuation ring
\begin{equation}\label{eq:B0i}
 B_i^0=\{x:a(x)>0\}\cup
       \{x:a(x)=0, u_i(\ac_0(x))\geq0\}.
\end{equation}

For $i\in\Z$ and $s\geq1$, let $d_{i,s}$ be the discrete valuation at the
closed point $t=X_i^s$ of the rational function field $F_0(t)$.  Its
residue field is $F_0$.  Refine it by $u_i$ and denote the resulting
rank-two valuation ring by $W^0_{i,s}$.  Thus an element of positive
$d_{i,s}$-value belongs to $W^0_{i,s}$, while an element of zero
$d_{i,s}$-value belongs precisely when the $u_i$-value of its residue is
nonnegative.  All these valuations are trivial on $k_0$.

We need the integer-exponent version of the limiting calculation.  If
$0\neq z=f(t)/g(t)$ and
\[
 f(t)=\sum_{r=r_0}^{r_1}a_rt^r,
 \qquad g(t)=\sum_{r=s_0}^{s_1}b_rt^r,
 \qquad a_{r_0}b_{s_0}\neq0,
\]
then, for all sufficiently large integers $s$, the least displayed term is
the unique term of least $u_i$-value after substituting $t=X_i^s$.  Indeed,
for $r>r_0$ the difference from the value of the $r_0$-term is
\[
 u_i(a_r)-u_i(a_{r_0})+s(r-r_0),
\]
which is eventually positive.  Applying the same estimate to the terms of
$g$ proves the corresponding assertion for its $s_0$-term.  Therefore
the evaluations are nonzero for all sufficiently large $s$ and
\begin{equation}\label{eq:discrete-asymptotic}
 u_i\bigl(z(X_i^s)\bigr)
   =s a(z)+u_i(\ac_0(z)).
\end{equation}
In particular,
\begin{equation}\label{eq:discrete-limiting}
 A_0\cap\bigcap_{s\geq1}W^0_{i,s}\subseteq B_i^0.
\end{equation}

For $n\in\Z$, define
\begin{align}
 H_n^0&=A_0\cap
      \bigcap_{\substack{i>n\\s\geq1}}W^0_{i,s},
      \label{eq:H0n}\\
 E_n^0&=B_n^0\cap
      \bigcap_{\substack{i>n\\s\geq1}}W^0_{i,s},
      \label{eq:E0n}\\
 S_{2n}^0&=E_n^0,
 \qquad S_{2n+1}^0=H_n^0.\label{eq:S0j}
\end{align}
Each defining family is countable, hence has cardinality less than
$\kappa$.  By \cref{lem:intersection}, every $S_j^0$ is a B\'ezout
domain with quotient field $K_0$.

The auxiliary rings satisfy
\begin{equation}\label{eq:discrete-nesting}
 E_n^0\subsetneq H_n^0\subsetneq E_{n+1}^0
 \qquad(n\in\Z).
\end{equation}
Indeed, $B_n^0\subseteq A_0$ gives the first inclusion.  The element
$X_n^{-1}$ lies in every valuation defining $H_n^0$, but its
$u_n$-value is $-1$; hence
\[
 X_n^{-1}\in H_n^0\setminus E_n^0.
\]
By \cref{eq:discrete-limiting}, the complete family
$(W^0_{n+1,s})_{s\geq1}$ in $H_n^0$ forces membership in $B_{n+1}^0$.
The other conditions defining $H_n^0$ are precisely the auxiliary
conditions retained in $E_{n+1}^0$, so
$H_n^0\subseteq E_{n+1}^0$.  This inclusion is strict because
\begin{equation}\label{eq:discrete-strict-witness}
 tX_{n+1}^{-2}\in E_{n+1}^0\setminus H_n^0.
\end{equation}
Its $a$-value is $1$; at $W^0_{n+1,1}$ its first value is zero and its
second value is $-1$, whereas every valuation retained in $E_{n+1}^0$
gives it nonnegative value.  This proves \cref{eq:discrete-nesting}.

We also record the test elements used after geometric filtering.  For
$i\in\Z$ and $s\geq1$, put
\begin{equation}\label{eq:discrete-test-element}
 p^0_{i,s}=tX_i^{-s}
      \prod_{h=1}^{s-1}(1-tX_i^{-h}).
\end{equation}
It has $a$-value $1$.  At $W^0_{i,r}$ with $r<s$, the factor indexed by
$h=r$ has positive first value; at $r=s$, the first value is zero and the
residue has $u_i$-value zero; and at $r>s$, the residue has $u_i$-value
$r-s>0$.  At $W^0_{j,r}$ with $j\neq i$, its residue has $u_j$-value
$r>0$.  Consequently
\begin{equation}\label{eq:discrete-test-properties}
 p^0_{i,s}\in M_0\cap
   \bigcap_{\substack{j\in\Z\\r\geq1}}W^0_{j,r},
 \qquad p^0_{i,s}\in(W^0_{i,s})^\times.
\end{equation}

\section{A descending chain with a divisorial intersection prime}
\label{sec:divisorial-negative-limit}

Let $\Omega$ be the set of closed points of the projective line
$\mathbf P^1_{F_0}$ other than the point $0$ defined by $t=0$.  For
$\xi\in\Omega$, write $\mathcal O_\xi$ for the corresponding discrete
valuation ring of $K_0=F_0(t)$.  We have
\begin{equation}\label{eq:Omega-cardinality}
 |F_0|=|\Omega|=\kappa.
\end{equation}
Indeed, $\Gamma_0$ is countable, so adjoining its group-algebra monomials
and then taking fractions does not increase the infinite cardinality of
$k_0$.  There are at most $\kappa$ irreducible polynomials over $F_0$, while
the $\kappa$ rational points $t=c$, $c\in F_0$, give the reverse inequality.

Well-order $\Omega=(\xi_\alpha)_{\alpha<\kappa}$.  Set
$\mathcal C_0=\varnothing$, and for $m\geq1$ set
\begin{equation}\label{eq:Cm}
 \mathcal C_m=\{\xi_\alpha:\alpha<\aleph_{m-1}\}.
\end{equation}
Then
\begin{equation}\label{eq:Cm-properties}
 \mathcal C_m\subseteq\mathcal C_{m+1},
 \qquad |\mathcal C_m|<\kappa,
 \qquad \bigcup_{m\geq0}\mathcal C_m=\Omega.
\end{equation}
Define
\begin{equation}\label{eq:Dm}
 G_m=\bigcap_{\xi\in\mathcal C_m}\mathcal O_\xi,
 \qquad
 D_m=S^0_{-m}\cap G_m,
 \qquad
 \mathfrak p_m=M_0\cap D_m.
\end{equation}
The valuation family defining $D_m$ has cardinality less than $\kappa$ and
is trivial on $k_0$.  Hence every $D_m$ is a B\'ezout domain with quotient
field $K_0$ by \cref{lem:intersection}.

We shall repeatedly choose a constant $c\in k_0^\times$ whose rational
point $t=c$ is not in a specified $\mathcal C_m$.  Such a choice is always
possible: there are $\kappa$ nonzero constants and fewer than $\kappa$
points in $\mathcal C_m$.  Also, $t=c$ is distinct from every special point
$t=X_i^s$.

\begin{proposition}\label{prop:filtered-chain}
The sequence
\[
 D_0\supsetneq D_1\supsetneq D_2\supsetneq\cdots
\]
is a strictly descending chain of B\'ezout domains with common quotient
field $K_0$.  Moreover,
\begin{equation}\label{eq:filtered-parity}
 \mathfrak p_m\text{ is divisorial in }D_m
 \quad\Longleftrightarrow\quad m\text{ is even}.
\end{equation}
\end{proposition}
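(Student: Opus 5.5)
The plan is to reduce everything to the calculations already done for the chain $(S^0_j)$ in \cref{sec:large-avoidance}. Each new valuation $\mathcal O_\xi$ should be invisible to the colon computations. This works as soon as $\mathfrak p_m$ contains a unit of $\mathcal O_\xi$ for every $\xi\in\mathcal C_m$. The constants of $F_0$ are units at all these points, so no new obstruction arises.

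\emph{Descent and the B\'ezout property.} The B\'ezout and quotient-field assertions are already recorded after \cref{eq:Dm}. Since $S^0_{-m-1}\subseteq S^0_{-m}$ by \cref{eq:discrete-nesting} and $\mathcal C_m\subseteq\mathcal C_{m+1}$, we have $D_{m+1}\subseteq D_m$. For strictness, write $-m=2n+1$ or $-m=2n$.
\begin{itemize}
 \item If $-m=2n+1$, then $D_m=H^0_n\cap G_m$ and $D_{m+1}=E^0_n\cap G_{m+1}$. The constant $X_n^{-1}$ is a unit in every $\mathcal O_\xi$, so it lies in $D_m\setminus D_{m+1}$.
 \item If $-m=2n$, then $D_m=E^0_n\cap G_m$ and $D_{m+1}=H^0_{n-1}\cap G_{m+1}$. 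The witness $tX_n^{-2}$ of \cref{eq:discrete-strict-witness} may have a pole at $\infty$, which can lie in $\mathcal C_m$. I will replace it by
\[
 y=\frac{tX_n^{-2}}{1-t/c},\qquad c\in k_0^\times,\ (t=c)\notin\mathcal C_{m+1}.
\]
 This $y$ is finite at $\infty$, regular at every other point of $\mathcal C_{m+1}$, and has $a$-value $1$. At each $W^0_{i,s}$, the factor $1-X_i^s/c$ has residue of $u_i$-value $0$. Hence $y$ has the same behaviour as $tX_n^{-2}$ at all the $W^0$-valuations, and $y\in D_m\setminus D_{m+1}$.
\end{itemize}

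\emph{Test element for the new colons.} Fix $c\in k_0^\times$ with $(t=c)\notin\mathcal C_m$ and set
\[
 q=\frac{t}{t-c}.
\]
Then $a(q)=1$. The element $q$ is a unit at $\infty$ and at every finite $\xi\in\Omega$ other than $t=c$, so $q\in G_m$. At each $W^0_{i,s}$ its residue is $X_i^s/(X_i^s-c)$, which has $u_i$-value $s>0$. Hence $q\in\mathfrak p_m$ and $q\in\mathcal O_\xi^\times$ for all $\xi\in\mathcal C_m$. By \cref{eq:unit-colon}, $(\mathcal O_\xi:\mathfrak p_m)=\mathcal O_\xi$.

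\emph{Colons for the old valuations.} These proceed exactly as in \cref{prop:odd,prop:even}, in the discrete setting.
\begin{itemize}
 \item $(A_0:\mathfrak p_m)=A_0$ and $(B^0_n:\mathfrak p_m)=A_0$, since $t\in\mathfrak p_m$ (for instance $t=q(t-c)$ up to checking) and $M_0=tA_0$. I would verify $t\in D_m$ directly if $\infty\notin\mathcal C_m$; otherwise I will use $q$ and $tA_0$-membership arguments instead.
 \item $(W^0_{i,s}:\mathfrak p_m)=W^0_{i,s}$, using $p^0_{i,s}$ from \cref{eq:discrete-test-properties}. The test element must also lie in $G_m$, which may fail at $\infty$. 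I will multiply it by $c/(c-t)$ as above, which keeps it a $W^0_{i,s}$-unit with $a$-value $1$.
\end{itemize}

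\emph{Parity.} Intersecting the colons gives the following.
\begin{itemize}
 \item For $m$ odd, $(D_m:\mathfrak p_m)=D_m$, so $\mathfrak p_m^{v}=D_m\neq\mathfrak p_m$ and $\mathfrak p_m$ is nondivisorial.
 \item For $m$ even, $(D_m:\mathfrak p_m)=H'\cap G_m$, with $H'$ the $H^0$-ring at the same index. I will then show $(D_m:H'\cap G_m)=\mathfrak p_m$ by the conductor argument of \cref{prop:even}. The contradiction element $X_n^{-e}$ is a constant, hence lies in $G_m$, so the argument goes through unchanged and $\mathfrak p_m$ is divisorial.
\end{itemize}

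The main obstacle is this bookkeeping at the point $\infty$ and at any special points $t=X_i^s$ that happen to lie in $\mathcal C_m$. I would handle it uniformly by multiplying each old test element by a factor $c/(c-t)$ with $t=c$ avoiding $\mathcal C_m$, which is possible by the cardinality count preceding the proposition.
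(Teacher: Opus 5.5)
There is a genuine gap, and it comes from the discreteness of $a=\operatorname{ord}_t$. You assert $(A_0:\mathfrak p_m)=A_0$ ``exactly as in'' \cref{prop:odd,prop:even}. Those computations chose $e\in\Q$ with $0<e<-v(x)$, which uses that $v$ has value group $\Q$. Here every nonzero element of $\mathfrak p_m\subseteq M_0=tA_0$ has $a$-value at least $1$, so $t^{-1}\mathfrak p_m\subseteq A_0$. In fact $(A_0:\mathfrak p_m)=t^{-1}A_0\neq A_0$, with $q_c$ giving the reverse inclusion. Thus at an odd stage $-m=2n+1$, intersecting the colons only yields
\[
 (D_m:\mathfrak p_m)=t^{-1}A_0\cap\bigcap_{i>n,\,s\geq1}W^0_{i,s}\cap G_m .
\]
Your conclusion $(D_m:\mathfrak p_m)=D_m$, and hence nondivisoriality at odd stages, needs an argument you do not give. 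Suppose $a(x)=-1$ and fix any $i>n$. By \eqref{eq:discrete-asymptotic}, $u_i(x(X_i^s))=-s+u_i(\ac_0(x))<0$ for all large $s$, so $x\notin W^0_{i,s}$. It is the complete family $(W^0_{i,s})_{s\geq1}$, not the $A_0$-condition, that removes the elements of value $-1$. This is the new ingredient the discrete setting requires.

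The same issue affects the even stage. Your claim $(B^0_n:\mathfrak p_m)=A_0$ is true, but the reason you give ($t$ or $q_c$ lies in $\mathfrak p_m$ and $M_0=tA_0$) only excludes $x$ with $a(x)\leq-2$. For $a(x)=-1$ you need elements of $\mathfrak p_m$ of $a$-value $1$ whose angular components have arbitrarily negative $u_n$-value. Your elements $t$ and $q_c$ have $u_n$-value $0$ there, so they cannot exclude such $x$. The paper uses $r_{n,s,c}=p^0_{n,s}/(t-c)^s$ with $s>u_n(\ac_0(x))$. This lies in $\mathfrak p_m$ by \eqref{eq:discrete-test-properties}, and its angular component is $X_n^{-s}$ up to a $u_n$-unit.

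Your regularization of the test elements is also too weak. Since $p^0_{i,s}$ has degree $s$ in $t$, the product $p^0_{i,s}\cdot c/(c-t)$ still has a pole of order $s-1$ at $\infty$. That point lies in $\mathcal C_m$ for all large $m$, so you need the $s$-th power of the correcting factor, as in \eqref{eq:regularized-test}.

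The strictness witnesses, the element $q_c$, and the conductor argument with $X_n^{-e}$ are fine and agree with the paper.
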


\begin{proof}
The inclusions follow from \cref{eq:discrete-nesting}, which gives
$S^0_{-(m+1)}\subseteq S^0_{-m}$, and from
$G_{m+1}\subseteq G_m$.  We first prove they are strict.  Suppose $m$ is
even and write $-m=2n$.  Choose $c\in k_0^\times$ with
$t=c\notin\mathcal C_{m+1}$, and set
\begin{equation}\label{eq:filtered-even-witness}
 y=\frac{tX_n^{-2}}{t-c}.
\end{equation}
The numerator is the witness in \cref{eq:discrete-strict-witness} for
$E_n^0\setminus H_{n-1}^0$.  The denominator has value zero, with
second value zero, at every special valuation $W^0_{i,s}$.  Consequently
$y\in E_n^0=S^0_{-m}$, but $y\notin H_{n-1}^0=S^0_{-(m+1)}$; explicitly,
its second value at $W^0_{n,1}$ is $-1$.  As a rational function of $t$,
$y$ has its only pole at $t=c$ and is regular at infinity because its
numerator and denominator have the same degree.  Hence
$y\in G_{m+1}\subseteq G_m$.  It follows that
$y\in D_m\setminus D_{m+1}$.

If $m$ is odd, write $-m=2n+1$.  The constant rational function
$X_n^{-1}$ has no geometric pole and belongs to
$H_n^0\setminus E_n^0$.  It therefore lies in
$D_m\setminus D_{m+1}$.  This proves strictness in both cases.

Fix $m$ and choose $c\in k_0^\times$ with
$t=c\notin\mathcal C_m$.  The element
\begin{equation}\label{eq:geometric-unit}
 q_c=\frac{t}{t-c}
\end{equation}
has $a$-value $1$.  At $W^0_{i,s}$ its first value is zero and its second
value is
\[
 u_i\left(\frac{X_i^s}{X_i^s-c}\right)=s.
\]
Its divisor on $\mathbf P^1_{F_0}$ consists of a zero at $0$ and a pole at
$c$, and it is a unit at infinity.  Thus
\begin{equation}\label{eq:qc-properties}
 q_c\in\mathfrak p_m,
 \qquad q_c\in\mathcal O_\xi^\times
       \quad(\xi\in\mathcal C_m).
\end{equation}
In particular, every $\mathfrak p_m$ is nonzero.

For $i\in\Z$ and $s\geq1$, regularize the element from
\cref{eq:discrete-test-element} by putting
\begin{equation}\label{eq:regularized-test}
 r_{i,s,c}=\frac{p^0_{i,s}}{(t-c)^s}.
\end{equation}
The polynomial $p^0_{i,s}$ has degree $s$, so $r_{i,s,c}$ is regular at
infinity and its only possible pole is $t=c$.  It therefore belongs to
$G_m$.  The denominator $t-c$ is a unit, of second value zero, at every
$W^0_{j,h}$, so \cref{eq:discrete-test-properties} gives
\begin{equation}\label{eq:regularized-properties}
 r_{i,s,c}\in M_0\cap D_m
 \quad\text{whenever }W^0_{i,s}\text{ occurs in }S^0_{-m},
 \qquad
 r_{i,s,c}\in(W^0_{i,s})^\times.
\end{equation}
The first assertion follows because $p^0_{i,s}$ belongs to every special
valuation, and its positive $a$-value puts it in either $A_0$ or $B_n^0$.

Suppose first that $m$ is odd, so $S^0_{-m}=H_n^0$ for some $n$.  Put
$\mathfrak p=\mathfrak p_m$.  Since all elements of $\mathfrak p$ have
positive integral $a$-value, while $q_c\in\mathfrak p$ has value $1$,
\begin{equation}\label{eq:filtered-A-colon}
 (A_0:\mathfrak p)=t^{-1}A_0.
\end{equation}
For every special valuation in $H_n^0$, the appropriate element
$r_{i,s,c}$ is a $\mathfrak p$-unit, so
$(W^0_{i,s}:\mathfrak p)=W^0_{i,s}$.  For every
$\xi\in\mathcal C_m$, the element $q_c$ is a $\mathfrak p$-unit, so
$(\mathcal O_\xi:\mathfrak p)=\mathcal O_\xi$.  Taking the colon through
the defining intersection for $D_m$ gives
\[
 (D_m:\mathfrak p)=t^{-1}A_0\cap
   \bigcap_{\substack{i>n\\s\geq1}}W^0_{i,s}
   \cap G_m.
\]
Every element of the displayed intersection has $a$-value at least $-1$.
Suppose that $x$ in this intersection has $a(x)=-1$, and fix any integer
$i>n$.  For all sufficiently large $s$, \cref{eq:discrete-asymptotic}
applies and gives
\[
 u_i\bigl(x(X_i^s)\bigr)=-s+u_i(\ac_0(x))<0.
\]
At such an $s$, the element $x$ is regular and nonzero at $t=X_i^s$, so
this strict inequality says $x\notin W^0_{i,s}$, contradicting the
displayed intersection.  Thus every element in it has nonnegative
$a$-value.  Its remaining special and geometric conditions are exactly
those defining $D_m$, so the displayed intersection is $D_m$.  Therefore
\[
 \mathfrak p^{v_{D_m}}=(D_m:(D_m:\mathfrak p))=D_m\neq\mathfrak p,
\]
and the odd prime is nondivisorial.

Suppose now that $m$ is even, so $S^0_{-m}=E_n^0$.  Again write
$\mathfrak p=\mathfrak p_m$.  We claim
\begin{equation}\label{eq:filtered-B-colon}
 (B_n^0:\mathfrak p)=A_0.
\end{equation}
If $x\in A_0$ and $q\in\mathfrak p$, then
$a(xq)=a(x)+a(q)>0$, so $xq\in B_n^0$.  This proves
$A_0\subseteq(B_n^0:\mathfrak p)$.  Since the $t$-adic valuation $a$ is
integer-valued, an element outside $A_0$ has value at most $-1$.  If
$a(x)\leq-2$, then $xq_c\notin B_n^0$.  If $a(x)=-1$, choose an integer
$s>u_n(\ac_0(x))$.  The regularized element $r_{n,s,c}$ belongs to
$\mathfrak p$: although $W^0_{n,s}$ is not one of the conditions defining
$E_n^0$, \cref{eq:discrete-test-properties} says that $p^0_{n,s}$ belongs
to every retained special valuation, and the denominator is a unit there.
At $t=0$ its angular component differs from $X_n^{-s}$ by a
$u_n$-unit.  Consequently
\[
 a(xr_{n,s,c})=0,
 \qquad
 u_n\bigl(\ac_0(xr_{n,s,c})\bigr)
   =u_n(\ac_0(x))-s<0,
\]
so $xr_{n,s,c}\notin B_n^0$.  This proves
\cref{eq:filtered-B-colon}.

The units in \cref{eq:qc-properties,eq:regularized-properties} now give
\begin{equation}\label{eq:filtered-even-first-colon}
 (D_m:\mathfrak p)=L_m,
 \qquad
 L_m:=H_n^0\cap G_m.
\end{equation}
We also have
\begin{equation}\label{eq:filtered-even-conductor}
 (D_m:L_m)=\mathfrak p.
\end{equation}
Indeed, $\mathfrak pL_m\subseteq D_m$: all auxiliary and geometric
conditions are shared, and the product has positive $a$-value, so it lies
in $B_n^0$.  Conversely, suppose $xL_m\subseteq D_m$.  Since $1\in L_m$,
$x\in D_m$.  If $x\notin\mathfrak p$, then $a(x)=0$ and
$u_n(\ac_0(x))\geq0$.  Choose $e\in\Q$ larger than the latter value.  The
constant $X_n^{-e}$ has no geometric pole and lies in every special
valuation defining $H_n^0$, so $X_n^{-e}\in L_m$.  But
$xX_n^{-e}\notin B_n^0$, contradicting $xL_m\subseteq D_m$.  This proves
\cref{eq:filtered-even-conductor}, and hence
\[
 \mathfrak p^{v_{D_m}}
  =(D_m:(D_m:\mathfrak p))=(D_m:L_m)=\mathfrak p.
\]
The parity assertion follows because $m$ and $-m$ have the same parity.
\end{proof}

Set
\begin{equation}\label{eq:E-infinity}
 D_\infty=\bigcap_{m\geq0}D_m.
\end{equation}
The geometric filtration was chosen so that the prime disappears at this
intermediate limit.

\begin{lemma}\label{lem:base-prime-vanishes}
One has $D_\infty\subseteq F_0$ and
\begin{equation}\label{eq:base-prime-zero}
 \bigcap_{m\geq0}\mathfrak p_m=M_0\cap D_\infty=0.
\end{equation}
\end{lemma}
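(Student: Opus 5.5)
The plan is to show first that every element of $D_\infty$ lies in $F_0$, and then to read off \eqref{eq:base-prime-zero}. Since $\bigcup_m\mathcal C_m=\Omega$ by \eqref{eq:Cm-properties}, we have
\[
 D_\infty\subseteq\bigcap_m G_m=\bigcap_{\xi\in\Omega}\mathcal O_\xi .
\]
So any $z\in D_\infty$ is a rational function in $F_0(t)$ that is regular at every closed point of $\mathbf P^1_{F_0}$ except possibly the point $t=0$.

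Next I would pin down the behavior at $t=0$. For $m\geq1$, $D_m\subseteq S^0_{-m}$, and every $S^0_j$ lies in $A_0$ (both $E^0_n$ and $H^0_n$ are contained in $A_0$, since $B^0_n\subseteq A_0$). Hence $a(z)\geq0$, and $z$ is regular at $0$ as well. A rational function on $\mathbf P^1_{F_0}$ regular at every closed point, including $\infty$ and $0$, has no poles, so it is a constant. I would justify this concretely: write $z=f/g$ in lowest terms; regularity at every finite closed point other than $0$, together with $a(z)\geq0$, forces $g$ to have no irreducible factor, so $g\in F_0^\times$. Regularity at infinity then forces $\deg f\leq0$. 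This gives $D_\infty\subseteq F_0$.

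For \eqref{eq:base-prime-zero}, the equality $\bigcap_m\mathfrak p_m=M_0\cap\bigcap_m D_m=M_0\cap D_\infty$ is formal from $\mathfrak p_m=M_0\cap D_m$. A nonzero constant $c\in F_0^\times$ has $a(c)=0$, so $c\notin M_0$. Therefore $M_0\cap D_\infty\subseteq M_0\cap F_0=0$.

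The only point needing care is that the geometric conditions really exhaust all closed points of $\mathbf P^1$ except $0$, including $\infty$. This is exactly the definition of $\Omega$ combined with \eqref{eq:Cm-properties}. It is also the reason the witnesses in \cref{prop:filtered-chain} were chosen with numerator and denominator of equal degree.
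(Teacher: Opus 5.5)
Your proposal is correct and follows essentially the same route as the paper: regularity at $0$ comes from $D_m\subseteq A_0$, regularity at every other closed point (including $\infty$) comes from $\bigcup_m\mathcal C_m=\Omega$, you conclude constancy via a lowest-terms $f/g$ argument, and then $M_0\cap F_0=0$ because $a$ vanishes on $F_0^\times$. The only cosmetic difference is that you pass through $\bigcap_m G_m=\bigcap_{\xi\in\Omega}\mathcal O_\xi$, whereas the paper picks, for each point $\xi$, one $m$ with $\xi\in\mathcal C_m$; the two are equivalent.
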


\begin{proof}
Every $D_m$ is contained in $A_0$, so an element of $D_\infty$ is regular
at the point $0$.  Given any other closed point $\xi$, choose $m$ with
$\xi\in\mathcal C_m$ by \cref{eq:Cm-properties}.  Then
$D_\infty\subseteq D_m\subseteq\mathcal O_\xi$.  Hence every element of
$D_\infty$ is regular at every closed point of $\mathbf P^1_{F_0}$.

For completeness, a rational function regular everywhere is constant.
Write it as $f(t)/g(t)$ with coprime polynomials in $F_0[t]$.  If $g$ is
nonconstant, any irreducible factor of $g$ gives a finite pole.  Thus $g$
is constant and the function is a polynomial.  Regularity at infinity then
forces its degree to be zero.  This proves $D_\infty\subseteq F_0$.
Every nonzero member of $F_0$ has $a$-value zero, whereas the nonzero
members of $M_0$ have positive $a$-value.  Their intersection is therefore
zero, proving \cref{eq:base-prime-zero}.
\end{proof}

We now turn the zero in \cref{eq:base-prime-zero} into a nonzero
divisorial prime without changing the finite-stage parity.  The following
pullback calculation contains every fact needed for that step.

\begin{lemma}[Nondiscrete valuation pullback]\label{lem:outer-pullback}
Let $K$ be a field, let $V$ be a rank-one valuation domain with quotient
field $L$, nondiscrete value group, maximal ideal $N$, residue field $K$,
and residue map $\varphi:V\to K$.  Suppose that $D$ is a Pr\"ufer subdomain
of $K$ with quotient field $K$, and put
\[
 R=\varphi^{-1}(D).
\]
Then $R$ is a Pr\"ufer domain with quotient field $L$; if $D$ is B\'ezout,
then $R$ is B\'ezout.  If
$0\neq\mathfrak a$ is an ideal of $D$ and
$I=\varphi^{-1}(\mathfrak a)$, then
\begin{equation}\label{eq:pullback-v-transfer}
 I^{v_R}=\varphi^{-1}(\mathfrak a^{v_D}).
\end{equation}
\end{lemma}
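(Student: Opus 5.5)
The plan is to work entirely inside $V$, using two structural facts: $N=\ker\varphi\subseteq R\subseteq V$, and $N$ is not principal because the value group is nondiscrete. Write $w$ for the valuation of $V$.

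\emph{Quotient field.} Every $x\in L^\times$ is a quotient of two elements of $N$: choose $n\in N$ with $w(n)>\max\{0,-w(x)\}$; then $xn\in N$ and $x=(xn)/n$. So $\qf(R)=L$.

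\emph{B\'ezout and Pr\"ufer.} For $0\neq a,b\in R$, suppose first $w(a)<w(b)$. Then $b/a\in N\subseteq R$, so $aR+bR=aR$.

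Suppose instead $w(a)=w(b)$. Write $a=ua'$ and $b=ub'$ with $u:=a$, so $a'=1$ and $b'=b/a\in V^\times$. Put $\alpha=\varphi(a')$ and $\beta=\varphi(b')$ in $K^\times$.

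When $D$ is B\'ezout, the fractional ideal $\alpha D+\beta D$ is principal, equal to $\gamma D$; this uses $\qf(D)=K$ to clear a common denominator. Write $\gamma=x\alpha+y\beta$ with $x,y\in D$. Lift $\gamma$ to $g\in V^\times$ and $x,y$ to $x',y'\in R$, and set $d=ug$.
\begin{itemize}
\item The elements $a/d$ and $b/d$ lie in $V$ and have residues $\alpha/\gamma,\beta/\gamma\in D$, so they lie in $R$.
\item Also $d-(x'a+y'b)=u\bigl(g-x'a'-y'b'\bigr)\in uN=aN\subseteq aR$.
\end{itemize}
Hence $aR+bR=dR$, and induction gives B\'ezout.

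For Pr\"ufer, I would run the same computation with an inverse in place of a generator. Write $(\alpha D+\beta D)(\delta_1D+\dots+\delta_rD)=D$ and lift the $\delta_j$ (rescaled by $u^{-1}$) to produce a fractional ideal $J$ of $R$ with $(aR+bR)J=R$. The residue computation is the same as above, and the $N$-part is absorbed because $N\subseteq aR+bR$ up to the factor $u$. Alternatively, I would cite the standard pullback characterization of Pr\"ufer domains.

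\emph{$v$-transfer.} The key point is
\[
 (V:N)=V .
\]
Indeed, if $x\in L$ has $w(x)<0$, nondiscreteness gives $n\in N$ with $0<w(n)<-w(x)$, so $xn\notin V$.

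Now let $\mathfrak b$ be any nonzero $D$-submodule of $K$ and set $J=\varphi^{-1}(\mathfrak b)\subseteq V$. Then $N\subseteq J$, so $(R:J)\subseteq(R:N)\subseteq(V:N)=V$.

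For $x\in V$, the condition $xJ\subseteq R$ is equivalent to $\varphi(x)\mathfrak b\subseteq D$. The reason is that $xN\subseteq N\subseteq R$ automatically, and $J$ is generated modulo $N$ by lifts of $\mathfrak b$. This gives
\[
 (R:\varphi^{-1}(\mathfrak b))=\varphi^{-1}\bigl((D:\mathfrak b)\bigr).
\]
Apply this first with $\mathfrak b=\mathfrak a$, and then with $\mathfrak b=(D:\mathfrak a)$, which is a nonzero $D$-submodule of $K$. The result is $I^{v_R}=\varphi^{-1}(\mathfrak a^{v_D})$.

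The main obstacle, and the only place nondiscreteness is used, is $(V:N)=V$. It forces every dual to stay inside $V$, where membership is decided by residues. The rest is routine bookkeeping of lifts.
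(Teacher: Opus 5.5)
Your proof is correct. The quotient-field step, the B\'ezout computation and the $v$-transfer agree with the paper. In the B\'ezout step, your normalization $u=a$ is the paper's choice of $z$ with $w(z)=w(x)$, arranged so that $\alpha=1$. Your identity $(R:\varphi^{-1}(\mathfrak b))=\varphi^{-1}((D:\mathfrak b))$, applied to $\mathfrak b=\mathfrak a$ and then to $\mathfrak b=(D:\mathfrak a)$, packages the paper's two colon computations. As in the paper, nondiscreteness enters only through $(V:N)=V$.

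The genuine difference is the Pr\"ufer assertion. The paper works locally. A maximal ideal containing $N$ is $\mathfrak q=\varphi^{-1}(\mathfrak m)$, and $R_{\mathfrak q}=\varphi^{-1}(D_{\mathfrak m})$ is a composite valuation ring. A maximal ideal missing $N$ would give an overring of $V$, but this case is actually vacuous, since $1+N\subseteq R^\times$.

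You work globally, lifting an inverse of $D+\beta D$. This uses the same mechanism as your B\'ezout step, but it relies on two standard facts you should state. First, invertible ideals are finitely generated. Second, invertibility of all nonzero two-generated ideals already implies Pr\"ufer. You can avoid the second fact by applying the argument to an arbitrary finite generating set, after discarding generators of strictly larger value.

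You should also make ``the $N$-part is absorbed'' precise. Put $J'=\widetilde\delta_1R+\dots+\widetilde\delta_rR$, and lift an expression $1=\sum_j(\lambda_j+\mu_j\beta)\delta_j$ with $\lambda_j,\mu_j\in D$. This gives an element $s\in(R+b'R)J'$ with $\varphi(s)=1$. Then $s$ is a unit of $R$, because $s^{-1}\in V$ has residue $1\in D$. Hence $(R+b'R)J'=R$.

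The paper's local route needs no auxiliary criteria and exhibits the valuation overrings explicitly. Yours is shorter once those criteria are granted. Your fallback of citing the classical pullback ($D+M$) characterization of Pr\"ufer domains is also legitimate.
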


\begin{proof}
Write $w$ for the valuation belonging to $V$.
We first prove the Pr\"ufer assertion by examining the localizations of
$R$.  A maximal ideal of $R$ which contains $N$ has the form
$\mathfrak q=\varphi^{-1}(\mathfrak m)$ for a maximal ideal
$\mathfrak m$ of $D$.  We claim
\begin{equation}\label{eq:pullback-localization}
 R_{\mathfrak q}=\varphi^{-1}(D_{\mathfrak m}).
\end{equation}
The inclusion from left to right is clear.  Conversely, let $x$ belong to
the right side.  If its residue is $d/s$ with $d\in D$ and
$s\in D\setminus\mathfrak m$, choose a lift $\widetilde s\in R$ with
residue $s$.  Then $\widetilde s x$ has residue $d$, so
$\widetilde s x\in R$, while $\widetilde s\notin\mathfrak q$.  Hence
$x\in R_{\mathfrak q}$.  Since
$D_{\mathfrak m}$ is a valuation domain, its inverse image on the right of
\cref{eq:pullback-localization} is the composite valuation domain: for an
element of $L^\times$ of outer value zero, membership is decided by the
residue valuation, and elements of positive or negative outer value are
decided by $V$.

If a maximal ideal $\mathfrak q$ of $R$ does not contain $N$, choose
$y\in N\setminus\mathfrak q$.  For every $x\in V$, one has
$yx\in N\subseteq R$, and hence $x=(yx)/y\in R_{\mathfrak q}$.  Thus
$R_{\mathfrak q}$ is an overring of the valuation domain $V$ and is itself
a valuation domain.  Every localization of $R$ at a maximal ideal is
therefore a valuation domain, so $R$ is Pr\"ufer.

Choose $0\neq y\in N$.  For every $x\in V$, both $y$ and $yx$ lie in $R$,
so $x\in\qf(R)$.  It follows that $V\subseteq\qf(R)$ and hence
$\qf(R)=\qf(V)=L$.

Suppose now that $D$ is B\'ezout, and take $0\neq x,y\in R$.  If
$w(x)<w(y)$, then $y/x\in N\subseteq R$, so $xR+yR=xR$; the case
$w(y)<w(x)$ is symmetric.  Assume $w(x)=w(y)=\alpha$.  Choose
$z\in L^\times$ with $w(z)=\alpha$ and set
\[
 a=\varphi(x/z),\qquad b=\varphi(y/z)\in K^\times.
\]
When $\alpha=0$ we take $z=1$, and then $a,b\in D$.  In all cases the
two-generated fractional ideal $aD+bD$ is principal.  Indeed, choose
$0\neq h\in D$ with $ha,hb\in D$; the integral ideal
$haD+hbD$ is $\eta D$ for some $\eta\in D$, and hence
$aD+bD=(\eta/h)D$.  Write
$aD+bD=\delta D$ with $\delta\in K^\times$.  Since
$\delta\in aD+bD$, choose $r,s\in D$ such that $\delta=ra+sb$, choose
lifts $\widetilde r,\widetilde s\in R$ of $r,s$, and put
$d=\widetilde r x+\widetilde s y$.  The residue of $d/z$
is the nonzero element $\delta$, so $w(d)=\alpha$.  Moreover, $x/d$ and
$y/d$ have outer value zero and residues $a/\delta,b/\delta\in D$.
Therefore $x/d,y/d\in R$, while $d\in xR+yR$, and hence
$xR+yR=dR$.  Induction shows that every finitely generated ideal of $R$ is
principal.  Thus $R$ is B\'ezout.

It remains to compute the two colons.  Because the value group is
nondiscrete, it has positive elements arbitrarily close to zero.  If
$x\notin V$, choose $y\in N$ with
$0<w(y)<-w(x)$.  Since
$N\subseteq I$, the element $y$ belongs to $I$, but $xy\notin V$ and hence
$xy\notin R$.  Thus $(R:I)\subseteq V$.  For $x\in V$, reducing products
modulo $N$ gives
\[
 xI\subseteq R
 \quad\Longleftrightarrow\quad
 \varphi(x)\mathfrak a\subseteq D.
\]
Here the implication also covers $x\in N$, whose residue is zero.  Therefore
\begin{equation}\label{eq:pullback-first-colon}
 (R:I)=\varphi^{-1}((D:\mathfrak a)).
\end{equation}
Put $J=(R:I)=\varphi^{-1}((D:\mathfrak a))$; in particular,
$N\subseteq J$.  If $x\notin V$, nondiscreteness again supplies
$y\in N$ with $0<w(y)<-w(x)$.  Then $y\in J$ but $xy\notin V$, so
$x\notin(R:J)$.  Hence $(R:J)\subseteq V$.  For $x\in V$, reduction
modulo $N$ gives
\[
 xJ\subseteq R
 \quad\Longleftrightarrow\quad
 \varphi(x)(D:\mathfrak a)\subseteq D.
\]
Consequently
\[
 (R:(R:I))=(R:J)
  =\varphi^{-1}((D:(D:\mathfrak a)))
  =\varphi^{-1}(\mathfrak a^{v_D}),
\]
which is \cref{eq:pullback-v-transfer}.
\end{proof}

\medskip
Apply the lemma explicitly as follows.  Let
\[
 L=\operatorname{Frac}(K_0[Z^{\Q}])
\]
and let $w$ be the $Z$-exponent valuation.  Its valuation ring $V$, maximal
ideal $N=\{x:w(x)>0\}$, and residue map
\[
 \varphi:V\longrightarrow K_0
\]
have value group $\Q$ and residue field $K_0$.  In particular, $V$ is
rank one and nondiscrete.  Define
\begin{equation}\label{eq:outer-chain}
 R_m=\varphi^{-1}(D_m),
 \qquad
 P=\varphi^{-1}(\mathfrak p_0).
\end{equation}

\begin{proposition}\label{prop:divisorial-negative-limit}
The rings in \cref{eq:outer-chain} form a strictly descending chain
\[
 R_0\supsetneq R_1\supsetneq R_2\supsetneq\cdots
\]
of B\'ezout domains with common quotient field $L$.  The ideal $P$ is a
nonzero prime of $R_0$, and
\begin{equation}\label{eq:outer-parity}
 P\cap R_m\text{ is divisorial in }R_m
 \quad\Longleftrightarrow\quad m\text{ is even}.
\end{equation}
If
\[
 R_\infty=\bigcap_{m\geq0}R_m,
\]
then $P\cap R_\infty=N$, and this is a nonzero divisorial prime of
$R_\infty$.
\end{proposition}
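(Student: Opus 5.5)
The plan is to transport everything through \cref{lem:outer-pullback}, using that $\varphi^{-1}$ is an inclusion-preserving bijection between subsets of $K_0$ and $N$-saturated subsets of $V$ (those containing $N$ and stable under adding $N$). It commutes with intersections and satisfies $\varphi^{-1}(X)\cap\varphi^{-1}(Y)=\varphi^{-1}(X\cap Y)$.

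\textbf{The chain.} By \cref{prop:filtered-chain} each $D_m$ is B\'ezout with quotient field $K_0$. \cref{lem:outer-pullback} then makes each $R_m$ a B\'ezout domain with quotient field $L$. Strictness of $R_m\supsetneq R_{m+1}$ follows from $D_m\supsetneq D_{m+1}$: since $\varphi$ is surjective, a lift of an element of $D_m\setminus D_{m+1}$ lies in $R_m\setminus R_{m+1}$.

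\textbf{The prime $P$.} $P=\varphi^{-1}(\mathfrak p_0)$ is prime because $R_0/P\cong D_0/\mathfrak p_0$, which is a domain since $\mathfrak p_0$ is prime. It is nonzero since it contains $N$.

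\textbf{Parity.} We have
\[
 P\cap R_m=\varphi^{-1}(\mathfrak p_0\cap D_m)=\varphi^{-1}(M_0\cap D_0\cap D_m)=\varphi^{-1}(\mathfrak p_m),
\]
using $D_m\subseteq D_0$. By \cref{prop:filtered-chain}, $\mathfrak p_m\neq0$. Applying \eqref{eq:pullback-v-transfer} with $D=D_m$ and $\mathfrak a=\mathfrak p_m$ gives
\[
 (P\cap R_m)^{v_{R_m}}=\varphi^{-1}(\mathfrak p_m^{v_{D_m}}).
\]
Since $\varphi^{-1}$ is injective on subsets of $K_0$ (because $\varphi$ is surjective), $P\cap R_m$ is divisorial exactly when $\mathfrak p_m$ is. \eqref{eq:filtered-parity} then yields \eqref{eq:outer-parity}.

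\textbf{The limit.} Commuting $\varphi^{-1}$ with the intersection gives $R_\infty=\varphi^{-1}(D_\infty)$ and
\[
 P\cap R_\infty=\varphi^{-1}(\mathfrak p_0\cap D_\infty)=\varphi^{-1}(M_0\cap D_\infty)=\varphi^{-1}(0)=N
\]
by \cref{lem:base-prime-vanishes}. $N$ is a nonzero prime of $R_\infty$, since $R_\infty/N\cong D_\infty$ is a domain.

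The main obstacle is divisoriality of $N$, because \cref{lem:outer-pullback} requires $\mathfrak a\neq0$ and also that $D$ be Pr\"ufer with quotient field $K_0$. Neither is available: $D_\infty\subseteq F_0$ has a smaller quotient field. I would therefore compute the colons directly.

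First, the nondiscreteness argument of \cref{lem:outer-pullback} shows $(R_\infty:N)\subseteq V$. Conversely, $VN\subseteq N\subseteq R_\infty$, so $(R_\infty:N)=V$.

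Next I would show $(R_\infty:V)=N$. Clearly $NV=N\subseteq R_\infty$. Suppose $xV\subseteq R_\infty$. Then $x\in R_\infty\subseteq V$. If $x\notin N$, then $\varphi(x)K_0=\varphi(xV)\subseteq D_\infty\subseteq F_0$, which is impossible since $t\varphi(x)\notin F_0$. Hence $x\in N$.

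Therefore $N^{v_{R_\infty}}=(R_\infty:V)=N$, so $N$ is divisorial.
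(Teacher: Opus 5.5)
Your proposal is correct and follows the paper's proof essentially step for step: you transport the B\'ezout property, strictness, primality and parity through \cref{lem:outer-pullback} via $P\cap R_m=\varphi^{-1}(\mathfrak p_m)$, and then compute $(R_\infty:N)=V$ by nondiscreteness and $(R_\infty:V)=N$ using $D_\infty\subseteq F_0$ and $t\notin F_0$. The paper also checks $\qf(R_\infty)=L$; this is not needed for divisoriality, since colons by nonzero submodules land in $\qf(R_\infty)$ anyway, but it is what lets the chain satisfy the hypothesis $\qf(T)=\qf(R)$ of the descending question.
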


\begin{proof}
Each $D_m$ is B\'ezout with quotient field $K_0$, so
\cref{lem:outer-pullback} makes $R_m$ B\'ezout (and hence Pr\"ufer) with
quotient field $L$.
Taking inverse images preserves the inclusions, and strictness is preserved
as well: if $d\in D_m\setminus D_{m+1}$, its constant lift in $V$ belongs
to $R_m\setminus R_{m+1}$.  Thus the chain is strictly descending.

The ideal $\mathfrak p_0=M_0\cap D_0$ is a nonzero prime, so its inverse
image $P$ is a nonzero prime of $R_0$.  Since $D_m\subseteq D_0$,
\begin{align}
 P\cap R_m
 &=\varphi^{-1}(\mathfrak p_0\cap D_m)\notag\\
 &=\varphi^{-1}(M_0\cap D_m)
  =\varphi^{-1}(\mathfrak p_m).
  \label{eq:outer-contraction}
\end{align}
Every $\mathfrak p_m$ is nonzero by \cref{eq:qc-properties}.  Therefore
\cref{lem:outer-pullback,eq:filtered-parity} give
\[
 (P\cap R_m)^{v_{R_m}}
  =\varphi^{-1}(\mathfrak p_m^{v_{D_m}}),
\]
and this equals $P\cap R_m$ exactly for even $m$.  This proves
\cref{eq:outer-parity}.

Inverse images commute with arbitrary intersections, so
\begin{equation}\label{eq:outer-limit-ring}
 R_\infty=\varphi^{-1}(D_\infty).
\end{equation}
Combining \cref{eq:base-prime-zero,eq:outer-contraction} gives
\begin{equation}\label{eq:outer-limit-prime}
 P\cap R_\infty
 =\varphi^{-1}(\mathfrak p_0\cap D_\infty)
 =\varphi^{-1}(0)=N.
\end{equation}
The ring $R_\infty$ has quotient field $L$: if $0\neq y\in N$, then for
every $x\in V$, both $y$ and $yx$ lie in $N\subseteq R_\infty$, so
$V\subseteq\qf(R_\infty)$ and hence $L=\qf(V)\subseteq\qf(R_\infty)$.
Conversely, $\qf(R_\infty)\subseteq L$ because
$R_\infty\subseteq V\subseteq L$.  Thus $\qf(R_\infty)=L$.

We finish by computing the two colons in this quotient field.  Since $N$ is
an ideal of $V$,
\[
 V\subseteq(R_\infty:N).
\]
If $x\notin V$, choose $e\in\Q$ with $0<e<-w(x)$.  Then
$Z^e\in N$, but $xZ^e\notin V$ and hence $xZ^e\notin R_\infty$.  Thus
\begin{equation}\label{eq:limit-N-first-colon}
 (R_\infty:N)=V.
\end{equation}
Also $NV=N\subseteq R_\infty$, so
$N\subseteq(R_\infty:V)$.  Conversely, suppose $xV\subseteq R_\infty$.
Since $1\in V$, one has $x\in R_\infty\subseteq V$.  If $x\notin N$, its
residue is a nonzero element $b\in D_\infty\subseteq F_0$ by
\cref{lem:base-prime-vanishes}.  The constant lift of the element
$t\in K_0$ is a unit of $V$, but the residue of $xt$ is $bt\notin F_0$;
in particular, it is not in $D_\infty$.  Hence $xt\notin R_\infty$, a
contradiction.  We have proved
\begin{equation}\label{eq:limit-N-second-colon}
 (R_\infty:V)=N.
\end{equation}
Combining \cref{eq:limit-N-first-colon,eq:limit-N-second-colon},
\[
 N^{v_{R_\infty}}
  =(R_\infty:(R_\infty:N))=(R_\infty:V)=N.
\]
Thus the limit prime is nonzero and divisorial.
\end{proof}

The chain in \cref{prop:divisorial-negative-limit} satisfies every
hypothesis of the descending question and has a divisorial intersection
prime.  The reversed nonpositive half of the original bilateral chain
satisfies the same hypotheses and has a nondivisorial intersection prime by
\cref{thm:main}.  Together with the
conclusion of \cref{sec:divisorial-positive-limit}, this proves
\cref{thm:limit-alternatives}.

\renewcommand{\sectionname}{}
\section*{Acknowledgments}
We used GPT-5.6 Sol and an agentic harness built around GPT-5.6 Sol and Claude Fable 5 to assist with literature searches, hypothesis testing, the exploration and elimination of potential approaches, wording refinement, and manuscript proofreading. We thank Hieu M. Vu, Tho Tran Huu, Khoi M. N. Nguyen, Dung V. Nguyen, and Quang X. Nguyen for their assistance with hardware-related matters and for providing technical support in the use of the AI tools and agentic system. Viet-Hoang Tran thanks Thieu N. Vo for insightful discussions regarding the problem.

\bibliographystyle{plain}
\begingroup
\footnotesize
\raggedright
\bibliography{references}

@incollection{CFFG2014,
  author    = {Cahen, Paul-Jean and Fontana, Marco and Frisch, Sophie and Glaz, Sarah},
  title     = {Open Problems in Commutative Ring Theory},
  booktitle = {Commutative Algebra: Recent Advances in Commutative Rings, Integer-Valued Polynomials, and Polynomial Functions},
  editor    = {Fontana, Marco and Frisch, Sophie and Glaz, Sarah},
  pages     = {353--375},
  publisher = {Springer},
  address   = {New York},
  year      = {2014},
  doi       = {10.1007/978-1-4939-0925-4_20}
}

@article{FHP1984,
  author  = {Fontana, Marco and Huckaba, James A. and Papick, Ira J.},
  title   = {Divisorial Prime Ideals in {Pr\"ufer} Domains},
  journal = {Canadian Mathematical Bulletin},
  volume  = {27},
  number  = {3},
  pages   = {324--328},
  year    = {1984},
  doi     = {10.4153/CMB-1984-049-9}
}

@article{FHP1986,
  author  = {Fontana, Marco and Huckaba, James A. and Papick, Ira J.},
  title   = {Some Properties of Divisorial Prime Ideals in {Pr\"ufer} Domains},
  journal = {Journal of Pure and Applied Algebra},
  volume  = {39},
  number  = {1--2},
  pages   = {95--103},
  year    = {1986},
  doi     = {10.1016/0022-4049(86)90137-4}
}

@article{FHL2010,
  author  = {Fontana, Marco and Houston, Evan and Lucas, Thomas G.},
  title   = {Toward a Classification of Prime Ideals in {Pr\"ufer} Domains},
  journal = {Forum Mathematicum},
  volume  = {22},
  number  = {4},
  pages   = {741--766},
  year    = {2010},
  doi     = {10.1515/forum.2010.041}
}

@article{HP1982,
  author  = {Huckaba, James A. and Papick, Ira J.},
  title   = {When the Dual of an Ideal Is a Ring},
  journal = {Manuscripta Mathematica},
  volume  = {37},
  pages   = {67--85},
  year    = {1982},
  doi     = {10.1007/BF01239947}
}

@article{Lucas2015,
  author  = {Lucas, Thomas G.},
  title   = {Divisorial Behavior of Prime Ideals in {Pr\"ufer} Domains},
  journal = {Rendiconti del Circolo Matematico di Palermo. Series II},
  volume  = {64},
  number  = {2},
  pages   = {277--290},
  year    = {2015},
  doi     = {10.1007/s12215-015-0199-5}
}

@incollection{Lucas2017,
  author    = {Lucas, Thomas G.},
  title     = {Divisorial Prime Ideals in {Pr\"ufer} Domains},
  booktitle = {Rings, Polynomials, and Modules},
  editor    = {Fontana, Marco and Frisch, Sophie and Glaz, Sarah and Tartarone, Francesca and Zanardo, Paolo},
  pages     = {281--298},
  publisher = {Springer},
  address   = {Cham},
  year      = {2017},
  doi       = {10.1007/978-3-319-65874-2_14}
}
\endgroup

\end{document}